\newtheorem{theorem}{Theorem}[section]
\newtheorem*{thmDW}{The Denjoy-Wolff Theorem}
\newtheorem{lemma}[theorem]{Lemma}
\newtheorem{obser}[theorem]{Observation}
\newtheorem{prop}[theorem]{Proposition}
\newtheorem{cor}[theorem]{Corollary}
\newtheorem{ndef}[theorem]{Definition}
\newtheorem*{defin}{Definition}
\newtheorem*{PTC}{Parabolic-Type Test}
\theoremstyle{remark}
\newtheorem{example}[theorem]{Example}
\newtheorem*{ack}{Acknowledgment}
\newcommand{\mysubsection}%
{\@startsection{subsection}{2}{\z@}{-3.25ex plus -1ex minus -.2ex}{-1ex}{\normalsize\sc}}
\newcommand{\Al}{\mathcal{A}}
\renewcommand{\phi}{\varphi}
\renewcommand{\epsilon}{\varepsilon}
\newcommand{\C}{\mathbb{C}}
\newcommand{\D}{\mathbb{D}}
\newcommand{\dw}{\omega}
\newcommand{\htwo}{H^2}
\renewcommand{\notin}{\not\in}
\renewcommand{\Re}{{\rm Re\,}}
\newcommand{\rhp}{R}
\newcommand{\vstrut}{\rule{0in}{0.18in}}
\newcommand{\ess}{\mathcal{S}}
\renewcommand{\mod}{{\rm mod\, }}
\begin{document}



\title[Spectra of composition operators]{Spectra of Composition Operators with Symbols in $\ess(2)$}
\author{Paul S. Bourdon}
\address{Department of Mathematics\\  University of Virginia\\ Charlottesville, VA 22903}
\email{psb7p@virginia.edu}

\begin{abstract}
{\scriptsize Let $H^2(\D)$ denote the classical Hardy space of the open unit disk $\D$ in the complex plane. We obtain descriptions of both the spectrum and essential spectrum of composition operators on $H^2(\D)$ whose symbols belong  to the class $\ess(2)$ introduced by Kriete and Moorhouse [{\it Trans.\ Amer.\ Math. Soc.}  359, 2007].  Our work  reveals new possibilities for the shapes of composition-operator spectra, settling a conjecture of Cowen's  [{\it J.\ Operator Th.} 9, 1983].   Our results depend on  a number of lemmas, perhaps of independent interest,  that provide spectral characterizations of sums of elements of a unital algebra over a field when certain  pairwise products of the summands are zero.}
\end{abstract}
\subjclass[2010]{Primary: 47B33; Secondary: 47A10}
\keywords{composition operator, Hardy space, spectrum, essential spectrum}
\maketitle
 
 \section{Introduction}

 Let $\D$ be the open unit disc in the complex plane, let $H(\D)$  be the space of analytic functions on $\D$, and let $H^2(\D)$ be the classical Hardy space, consisting of those functions in $H(\D)$ whose Maclaurin coefficients are square summable.  For $\phi$ an analytic selfmap of $\D$, let $C_\phi$ be the composition operator with symbol $\phi$, so that $C_\phi f = f\circ \phi$ for any $f \in H(\D)$.   Clearly $C_\phi$ preserves $H(\D)$.  Littlewood \cite{Lit} proved $C_\phi$ also preserves $H^2(\D)$; and thus, by the closed-graph theorem, $C_\phi:H^2(\D)\rightarrow H^2(\D)$ is a bounded linear operator.   For the remainder of this paper, we assume all composition operators act on $H^2(\D)$.  
 
 Beginning in the late 1960s, through the 1970s, and early 1980s, Nordgren \cite{ND}, Deddens \cite{DD}, Caughran and Schwartz \cite{CSw}, Kamowitz \cite{KM}, and Cowen \cite{Cow2} characterized the spectrum of composition operators on $H^2(\D)$ whose symbols $\phi$ are linear-fractional selfmaps of $\D$.  These spectra take a variety of forms, e.g., disks, annuli, and spirals, depending on the location of the Denjoy-Wolff point $\omega$ of the symbol $\phi$, the derivative $\phi'(\omega)$,  whether or not $\phi$ is inner, and whether or not $\phi$ induces a composition operator is that is power-compact.  See \cite{Cow} for details.  In this paper, we use these known spectral characterizations for linear-fractional composition operators to obtain spectral characterizations for composition operators  whose symbols belong to the class $\ess(2)$ introduced by Kriete and Moorhouse \cite{KMH}.  When $\phi\in \ess(2)$,   $C_\phi$ is equivalent, in the Calkin Algebra, to a sum of linear-fractional composition operators, and this equivalence permits us to characterize the essential spectrum of $C_\phi$.   Obtaining the full spectrum, once the essential spectrum is known, is not difficult  for the composition operators we consider.  This paper may be viewed as a sequel to \cite{BSp}, which contains characterizations of spectra of certain composition operators $C_\phi$ under the assumption that $C_\phi$ is equivalent, in the Calkin Algebra, to a single linear-fractional composition operator.
 
    In the next section, we show that
\begin{equation}\label{lpmap}
\phi_{lp}(z) = \frac{2z^2 - z -2}{2z^2-3}
\end{equation}
belongs to the class $\ess(2)$ and that $C_{\phi_{lp}}-C_{\psi_1} - C_{\psi_2}$ is a compact operator on $H^2(\D)$, where
\begin{equation}\label{psi12}
\psi_1(z) = \frac{4-3z}{5-4z} \quad \text{and} \quad \psi_2(z) = \frac{41z+32}{40z + 49}.
\end{equation}
By \cite[Cor.\ 6.2]{Cow2}, the essential spectrum of $C_{\psi_1}$ is the segment $[0,1]$,  while the essential spectrum of $C_{\psi_2}$ is the disk $\{z: |z| \le 1/3\}$  \cite[Proof of Theorem 5] {Cow} (see also \cite[Theorem 3.2]{BSp}).  By Theorem~\ref{MT}  below, the essential spectrum and spectrum of $C_{\phi_{lp}}$ both equal the union of $[0,1]$ and $\{z: |z| \le 1/3\}$.  Thus the spectrum of $C_{\phi_{lp}}$ is shaped like a lollipop.   The selfmap $\phi_{lp}$ of $\D$ is of ``parabolic non-automorphism type'' (see Definition~\ref{Type}  below).  For $\phi$ of this type, Cowen \cite[Conjecture 4 of  \S 6]{Cow2} conjectures that the spectrum of $C_\phi$ is a region between two spirals; that is, for some $\theta_1, \theta_2$ with $-\pi/2 \le \theta_1 \le \theta_2 \le \pi/2$, the spectrum is  $\{e^{-\beta}: \theta_1 \le \arg \beta \le \theta_2\} \cup \{0\}$. A lollipop is not such a region and thus our work settles Cowen's conjecture in the negative.  

  In the next section, we present background information  needed for our work.  Section 3 contains some lemmas characterizing the spectrum of a sum  $a_1 + a_2 + \cdots + a_n$ of elements of a unital algebra  $\Al$ over a field, where certain  pairwise products with factors from the set  $\{a_1, a_2, \ldots, a_n\}$  are zero.  We rely on these lemmas in Section 4, which contains our main results.
  
  \begin{ack} The author is grateful to Thomas Kriete for many helpful conversations regarding $\ess(2)$ as well as the spectral lemmas of Section 3.  These lemmas are much improved over those appearing in an earlier version of this paper thanks to Trieu Le,  who pointed out that the original version of what is now Proposition~\ref{TA} had unnecessary hypotheses.   
  \end{ack} 
   \section{Background}
 
       We begin by describing function-theoretic properties of $\phi$ known to influence the spectral behavior of the composition operator $C_\phi: H^2(\D) \rightarrow H^2(\D)$.
   
   \subsection{Function-theoretic preliminaries}

   Throughout this paper $\phi$ denotes an analytic selfmap of $\D$ and  $\phi^{[n]}$ represents the $n$-th iterate of $\phi$, $\phi\circ \phi \circ  \cdots \circ \phi$, $n$ times ($\phi^{[0]}$ is the identity).   Let $n$ be a positive integer, let $\zeta\in \partial \D$, and let $0 \le \epsilon < 1$. Following
\cite[p.~50]{BoS}, we say that $\phi$ belongs to $C^{n+\epsilon}(\zeta)$ provided that $\phi$ is
differentiable at $\zeta$ up to order $n$ $($viewed as a function
with domain $\D\cup \{\zeta\})$ and, for $z\in \D$, has the expansion
$$
\phi(z) = \sum_{k=0}^{n}\frac{\phi^{(k)}(\zeta)}{k!}(z-\zeta)^k
+ \gamma(z),
$$ 
where $\gamma(z) = o(|z-\zeta|^{n+\epsilon})$ as $z\rightarrow
\zeta$ from within $\D$.  It is not difficult to show that $\phi\in C^{n}(\zeta)$ whenever $\phi^{(n)}$ extends
continuously to $\D\cup\{\zeta\}$.   

     The spectral properties of a composition operator are strongly tied to the location of the Denjoy-Wolff point of its symbol.   Recall that an elliptic automorphism is an automorphism of $\D$ fixing a point in $\D$. 
   
\begin{thmDW}  If $\phi$ is an analytic selfmap of $\D$ that is not an elliptic automorphism, then there is a point $\omega$ in the closed disk $\D^-$ such that 
$$
\text{for all}\ z\in \D, \ \ \phi^{[n]}(z) \rightarrow \omega \ \text{as}\ n\rightarrow \infty.
$$
The Denjoy-Wolff point $\omega$ of $\phi$ may be characterized as follows:
\begin{itemize}
\item[(i)] if $|\omega| < 1$, then $\phi(\omega) = \omega$ and $|\phi'(\omega)| < 1$;
\item[(ii)]  if $|\omega| = 1$, then $\phi(\omega) = \omega$ and $0 < \phi'(\omega) \le 1$.
\end{itemize}
\end{thmDW}

When (ii) holds,  $\phi(\omega)$ is the angular (nontangential) limit of  $\phi$ at $\omega\in \partial \D$ and $\phi'(\omega)$ represents the angular derivative, which may be computed as the angular limit of $\phi'$ at $\omega$.  Observe that if $\omega\in \partial \D$ is the Denjoy-Wolff point of $\phi$, then (ii) above yields, in particular, that $\phi'(\omega) > 0$.  This is a general property of angular derivatives at fixed points of $\phi$  that lie on $\partial \D$; that is, if $\zeta\in \partial \D$ is fixed by $\phi$ and the angular derivative of $\phi$ exists at $\zeta$, then it follows (from the Julia-Carath\'{e}odory theorem) that  $\phi'(\zeta)> 0$. For details about angular derivatives and the Julia-Carth\'{e}odory theorem, the reader may consult \cite[Chapter 4]{Sh2} or \cite[\S 2.3]{CMB}.  As an example, note $\phi_{lp}$ defined by (\ref{lpmap}) has Denjoy-Wolff point $1$: $\phi(1) = 1$ and $\phi'(1) = 1$. 

\begin{ndef}\label{Type}    Suppose that $\phi$ has Denjoy-Wolff point $\omega$.  We classify $\phi$ as follows $($cf. \cite[Definition 0.3]{BoS}$)$:
  \begin{itemize}
  \item   if $\omega \in \D$, we say $\phi$ is of {\it dilation type};
  \item if $\omega\in \partial\D$ and $\phi'(\omega) < 1$, we say $\phi$ is of {\it hyperbolic type};
  \item If $\omega\in \partial\D$ and $\phi'(\omega) = 1$, then $\phi$ is of {\it parabolic type}.  Furthermore, if the orbit $(\phi^{[n]}(0))$ has consecutive terms separated in the hyperbolic metric on $\D$, then $\phi$ is of {\it parabolic automorphism type}; otherwise, $\phi$ is of  {\it parabolic non-automorphism type.}
  \end{itemize}
\end{ndef}
Distinguishing the two subcases of parabolic type can be difficult, but for maps having sufficient smoothness on $\D \cup \{\omega\}$, there is the following test (\cite[Theorem 4.4]{BoS}):
\begin{PTC}  Suppose that $\phi\in C^2(\dw)$  and $\phi'(\dw) = 1$.  Then $\Re(\dw \phi''(\dw)) \ge 0$; moreover,
\begin{itemize}
\item[(a)]  if $\phi''(\dw) = 0$ or if $\Re(\dw \phi''(\dw)) > 0$, then $\phi$ is of parabolic non-automorphism type;
\item[(b)] if $\dw\phi''(\dw)$ is pure imaginary $($and nonzero$)$ and $\phi\in C^{3+\epsilon}(\dw)$ for some positive $\epsilon$, then $\phi$ is of parabolic automorphism type.
\end{itemize}
\end{PTC}
Note the Parabolic-Type Test says in particular that if $1$ is the Denjoy-Wolff point of a parabolic type $\phi$ and  $\phi\in C^2(1)$, then $\Re(\phi''(1)) > 0$ is sufficient to ensure that $\phi$ is of parabolic non-automorphism type.  

  To explore further the notion of type for analytic selfmaps $\phi$ of $\D$,  we consider the ``right-halfplane incarnation''   of such maps under conjugation by $R(z) =  (1+z)/(1-z)$, where $R$ maps $\D$ onto the right halfplane $\{z: \Re(z)>0\}$ with the boundary points $1$ and $-1$ taken, respectively, to $\infty$ and $0$.    Observe that $\phi$ is an analytic selfmap of $\D$ if and only if  $ \Phi:=R\circ\phi \circ R^{-1}$ is an analytic selfmap of the right halfplane.   For instance, the selfmap of $\D$
  $$
  \phi_{lp}(z) =  \frac{2z^2 - z -2}{2z^2-3}
  $$
 has right-halfplane incarnation $\Phi_{lp}:=R\circ \phi_{lp}\circ R^{-1}$ given by
\begin{equation}\label{rhplp}
  \Phi_{lp}(w) = w + 8 - \frac{8}{w+1}.
\end{equation}
  Observe that if $\Re(w) >0$, then clearly $\Re(\Phi_{lp}(w)) >0$; so this right-halfplane incarnation provides a simple way to see that $\phi_{lp}$ is indeed a selfmap of $\D$.  In general, if $\phi$ is selfmap of $\D$ having Denjoy-Wolff point $1$ and $\phi\in C^2(1)$, then the right-halfplane incarnation of $\Phi$ of $\phi$ will have the form
\begin{equation}\label{RHPI}
  \Phi(w) = \frac{1}{\phi'(1)} w +  \frac{1}{\phi'(1)} -1 + \frac{\phi''(1)}{\phi'(1)^2} + \Gamma(w),
  \end{equation}
  where $\Gamma(w) \rightarrow 0$ as $w\rightarrow \infty$ in the right halfplane (cf. \cite[Equation (27)]{BLNS}).   Thus for a selfmap $\phi$ of $\D$ that belongs to $C^2(1)$ and has Denjoy-Wolff point $1$, the coefficient of $w$ in its right-halfplane incarnation $\Phi$ reveals whether the map is of hyperbolic or parabolic type and, in the parabolic case, the constant term in (\ref{RHPI}), which reduces to $\phi''(1)$,  can indicate whether the map is of non-automorphism type.  For example,  using (\ref{RHPI}) and (\ref{rhplp}), we see that $\phi_{lp}''(1) = 8$ and the Parabolic-Type Test assures us that $\phi_{lp}$ is of non-automorphism type because $\Re(\phi_{lp}''(1)) > 0$.  The right-halfplane incarnation of $\phi$ also provides a convenient way to understand the terminology ``automorphism-type'' and ``non-automorphism type''  as well as the naturalness of the Parabolic-Type Test.  A parabolic automorphism of the disk with Denjoy-Wolff point  $\omega = 1$ is precisely a linear-fractional selfmap of $\D$ whose right-halfplane incarnation takes the form
  $$
  \Phi(w) = w + a,
  $$
  where $a$ is pure imaginary (nonzero).  If, on there other hand, $\Phi(w) = w + a$ where $a$ has positive real part, then  the corresponding $\phi$ would be a parabolic (linear-fractional) non-automorphism of $\D$ and $ \Re(\phi''(1))  =\Re(a) > 0$, consistent with the Parabolic-Type Test.  
  
The notion of the ``order of contact'' (or ``order of approach'')  that  $\phi(\D)$ has with $\partial \D$ has long played an important role in composition-operator theory (see, e.g. \cite[Section 3.6]{Sh2} for a discussion of order of contact and the compactness question). 

\begin{defin}  Following Kriete and Moorhouse \cite{KMH}, we say that $\phi$ has order of contact $2$ at $\zeta\in \partial \D$ provided that  $|\phi(\zeta)| =1$ and 
\begin{equation}\label{KMOOC}
\frac{1-|\phi(e^{i\theta})|^2}{|\phi(\zeta) - \phi(e^{i\theta})|^2}  
\end{equation}
is essentially bounded above and away from $0$ as $e^{i\theta} \rightarrow \zeta$.  
\end{defin}

 Let $C_r$ be a circle of radius $r < 1$ that is internally tangent to $\partial D$ at $\phi(\zeta)$.  The point $\phi(e^{i\theta})$ in $\D$ lies on $C_r$ for some $\theta$ if and only if the quantity (\ref{KMOOC}) equals $(1-r)/r$. Thus, geometrically speaking, $\phi$ has order of contact $2$ at $\zeta$ means that there are two circles, each internally tangent to $\partial \D$ at $\phi(\zeta)$ and a subset $E$ of  $\partial \D$ having full measure such that if  $e^{i\theta}\in E$ is sufficiently close to $\zeta$, then $\phi(e^{i\theta})$ lies between the two circles.  It follows that if $\phi$ is a linear-fractional non-automorphism such that $\phi(\zeta) \in \partial \D$, then $\phi$ has order of contact $2$ at $\zeta$.  
 
 For  selfmappings $\phi$ of $\D$ that belong to $C^2(\zeta)$,  we establish the following easy test (cf. \cite[p.\ 49]{BLNS}, \cite[Propositions\ 1.2, 1.3]{BSp}) involving $\phi'(\zeta)$ and $\phi''(\zeta)$ that reveals whether $\phi$ has order of contact 2 at $\zeta$.

\begin{prop}\label{OCT}  Suppose that  $\phi\in C^2(\zeta)$ and that $\phi(\zeta) = \eta \in \partial \D$.   Then $\phi$ has order of contact $2$ at $\zeta$ if and only if
\begin{equation}\label{CurvatureCondition}
\Re\left(\frac{1}{|\phi'(\zeta)|} + \frac{\zeta \phi''(\zeta)}{\phi'(\zeta)|\phi'(\zeta)|} - 1 \right) > 0.
\end{equation}
\end{prop}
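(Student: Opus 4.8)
The plan is to insert the $C^2(\zeta)$ expansion of $\phi$ directly into the quotient (\ref{KMOOC}) and show that it tends to a finite limit as $z\to\zeta$, that limit being exactly the real number on the left of (\ref{CurvatureCondition}); the equivalence then falls out. Write $\eta=\phi(\zeta)$, $u=z-\zeta$, and $\delta=\phi(z)-\eta$; by hypothesis $\delta=\phi'(\zeta)u+\tfrac12\phi''(\zeta)u^2+o(|u|^2)$. Since $|\eta|=1$, expanding $|\phi(z)|^2=|\eta+\delta|^2$ gives the two exact relations $|\eta-\phi(z)|^2=|\delta|^2$ and $1-|\phi(z)|^2=-2\Re(\overline{\eta}\,\delta)-|\delta|^2$, so the quotient is a ratio of quantities I will expand to leading order in $|u|^2$.

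Two elementary ingredients drive the computation. First, for $z,\zeta\in\partial\D$ one has the exact identity $\Re(\overline{\zeta}(z-\zeta))=-\tfrac12|z-\zeta|^2$, together with $u^2=-\zeta^2|u|^2+O(|u|^3)$ as $z\to\zeta$ along $\partial\D$. Second, the Julia--Carath\'eodory theorem pins down the phase of the angular derivative: since $\phi$ is a selfmap of $\D$ with $\phi(\zeta)=\eta\in\partial\D$, one has $\phi'(\zeta)=|\phi'(\zeta)|\,\eta/\zeta$, equivalently $\overline{\eta}\phi'(\zeta)=|\phi'(\zeta)|\overline{\zeta}$. Feeding these into the expansion, the first-order part of $-2\Re(\overline{\eta}\,\delta)$ contributes $-2\Re(\overline{\eta}\phi'(\zeta)u)=|\phi'(\zeta)|\,|u|^2$, the second-order part contributes $\Re(\overline{\eta}\,\zeta^2\phi''(\zeta))\,|u|^2$, and $|\delta|^2=|\phi'(\zeta)|^2|u|^2+o(|u|^2)$; dividing the numerator by the denominator yields
\begin{equation*}
\frac{1-|\phi(z)|^2}{|\eta-\phi(z)|^2}\longrightarrow \frac{1}{|\phi'(\zeta)|}+\frac{\Re(\overline{\eta}\,\zeta^2\phi''(\zeta))}{|\phi'(\zeta)|^2}-1 \;=:\; L.
\end{equation*}
A final use of the phase relation (via $1/\phi'(\zeta)=\zeta\overline{\eta}/|\phi'(\zeta)|$) rewrites the middle term as $\Re\bigl(\zeta\phi''(\zeta)/(\phi'(\zeta)|\phi'(\zeta)|)\bigr)$, so $L$ is precisely the left-hand side of (\ref{CurvatureCondition}).

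Since the quotient is nonnegative (boundary values of $\phi$ have modulus at most $1$) and converges to the finite value $L$, it is essentially bounded above automatically, and it is essentially bounded away from $0$ if and only if $L>0$; this gives the asserted equivalence. The main obstacle is not the algebra but the justification that the interior expansion actually governs the \emph{boundary} quotient in (\ref{KMOOC}): the expansion in the definition of $C^2(\zeta)$ is stated for $z\in\D$, whereas (\ref{KMOOC}) is evaluated at boundary points $e^{i\theta}$, so I would need to transfer the estimate to the boundary arc near $\zeta$ (through nontangential limits and the regularity packaged in $\phi\in C^2(\zeta)$) and to confirm the phase relation $\overline{\eta}\phi'(\zeta)=|\phi'(\zeta)|\overline{\zeta}$ in this boundary setting, as in \cite[p.\ 49]{BLNS} and \cite[Propositions 1.2, 1.3]{BSp}.
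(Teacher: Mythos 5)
Your proposal is correct, and it takes a genuinely different route from the paper's. The paper first normalizes to $\psi(z)=\ol{\eta}\,\phi(\zeta z)$, which fixes $1$, and then conjugates by $R(z)=(1+z)/(1-z)$ to the right halfplane, where the quotient (\ref{KMOOC}) becomes \emph{exactly} $\Re(\Psi(w))$; in that picture the linear term $w/\psi'(1)$ of the $C^2$ expansion contributes nothing at boundary points (it is purely imaginary there, since $\psi'(1)>0$ by Julia--Carath\'{e}odory), so the boundary quotient visibly tends to the constant $\Re\bigl(1/\psi'(1)+\psi''(1)/\psi'(1)^2-1\bigr)$ and no second-order asymptotics on the circle are ever needed. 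You instead compute directly on $\partial\D$: the exact identities $1-|\phi(z)|^2=-2\Re(\ol{\eta}\,\delta)-|\delta|^2$ and $|\eta-\phi(z)|^2=|\delta|^2$, the circle asymptotics $\Re(\ol{\zeta}u)=-\tfrac12|u|^2$ and $u^2=-\zeta^2|u|^2+O(|u|^3)$, and the Julia--Carath\'{e}odory phase relation $\ol{\eta}\phi'(\zeta)=|\phi'(\zeta)|\ol{\zeta}$ replace the conjugation; your limit $L$ does agree with the left side of (\ref{CurvatureCondition}) (the rewriting via $1/\phi'(\zeta)=\zeta\ol{\eta}/|\phi'(\zeta)|$ is exactly right), and your closing equivalence argument (nonnegativity of the quotient plus convergence to $L$) is the same as the paper's. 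What the halfplane buys is that the real-part identity does all the geometric work and the normalization kills notational clutter; what your computation buys is that it never leaves the disk or the literal definition of order of contact, at the cost of the two circle identities.

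The one step you flag but do not execute --- transferring the interior $C^2(\zeta)$ expansion to the a.e.-defined boundary values --- is genuinely needed, but it is routine, and you should carry it out rather than leave it conditional. Given $\epsilon>0$, the definition of $C^2(\zeta)$ (the $o$-estimate holds for \emph{unrestricted} approach within $\D$) provides $\delta>0$ with $|\gamma(z)|\le\epsilon|z-\zeta|^2$ for all $z\in\D$ satisfying $|z-\zeta|<\delta$; at any boundary point $e^{i\theta}$ with $|e^{i\theta}-\zeta|<\delta$ at which $\phi$ has a nontangential limit (almost every point), approach nontangentially and pass to the limit in this inequality to obtain $|\gamma(e^{i\theta})|\le\epsilon|e^{i\theta}-\zeta|^2$. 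That yields your boundary expansion at a.e.\ point near $\zeta$, which is all the computation uses; it also shows the denominator $|\delta|^2$ is nonzero a.e.\ near $\zeta$, so the quotient is defined where you need it. Note too that the phase relation requires no separate ``boundary confirmation'': it is a statement about the angular derivative at $\zeta$ itself, precisely the fact $\zeta\ol{\eta}\phi'(\zeta)=|\phi'(\zeta)|>0$ that the paper quotes from the Julia--Carath\'{e}odory theorem. The paper's own proof makes an entirely analogous boundary passage (nontangential limits applied to both sides of (\ref{GTOOC}), plus $\Gamma(R(\xi))\to 0$ deduced from the unrestricted decay of $\Gamma$), so once the limiting argument above is inserted your proof is complete and no less rigorous.
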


\begin{proof}  Set   $\psi(z)= \bar{\eta} \phi(\zeta z)$ and note $\psi(1) = 1$ because $\phi(\zeta) = \eta$.  Observe that 
$$
\frac{1-|\phi(e^{i\theta})|^2}{|\phi(\zeta) - \phi(e^{i\theta})|^2} = \frac{1-|\psi(\bar{\zeta}e^{i\theta})|^2}{|1 - \psi(\bar{\zeta}e^{i\theta})|^2}.
$$
Thus, $\phi$ satisfies the order of contact 2 condition at $\zeta$ if and only if $\psi$ satisfies the order of 2 contact condition at its fixed point $1$.  Also note that $\psi$ is $C^2(1)$ if and only if $\phi\in C^2(\zeta)$.  We show that $\psi$ has order of contact $2$ at $1$ if and only if 
$
\Re\left(\frac{1}{\psi'(1)} + \frac{\psi''(1)}{\psi'(1)^2} - 1\right) > 0.
$
This will complete the proof because  $\psi'(1) = \zeta\bar{\eta}\phi'(\zeta)$ is positive (a consequence of the Julia Carath\'{e}odory Theorem; see, e.g., \cite[Theorem 2.44]{CMB}), making $\psi'(1)= |\phi'(\zeta)|$; moreover,
$$
 \frac{\psi''(1)}{\psi'(1)^2}  = \frac{\zeta^2 \bar{\eta} \phi''(\zeta)}{\zeta^2 \bar{\eta}^2\phi'(\zeta)^2} = \frac{\zeta \phi''(\zeta)}{\phi'(\zeta)|\phi'(\zeta)|}.
 $$
Let $\Psi= R \circ \psi \circ R^{-1}$ be the right-halfplane incarnation of $\psi$. Let $z\in \D$ and  $w = R(z)$; we have
$$
\psi(z)  = R^{-1}(\Psi(w)) = \frac{\Psi(w) - 1}{\Psi(w)+1}.
$$
Hence 
\begin{align*}
\frac{1-|\psi(z)|^2}{|1 - \psi(z)|^2} &= \frac{|\Psi(w) + 1|^2 - |\Psi(w) -1|^2}{4} \\
& = \Re(\Psi(w)). \nonumber
\end{align*}
Because  $\psi\in C^2(1)$, we may replace $\Psi$ with its expansion 
$$
\Psi(w) = \frac{1}{\psi'(1)}w + \frac{1}{\psi'(1)} + \frac{\psi''(1)}{\psi'(1)^2} - 1 + \Gamma(w),
$$
where $\Gamma(w) \rightarrow 0$ as $w \rightarrow \infty$ (where $w$ is in the right-halfplane).  We obtain
\begin{equation}\label{GTOOC}
\frac{1-|\psi(z)|^2}{|1 - \psi(z))|^2}   = \Re\left( \frac{1}{\psi'(1)}w + \frac{1}{\psi'(1)} + \frac{\psi''(1)}{\psi'(1)^2} - 1 + \Gamma(w)\right),
\end{equation}
where $w = R(z)$.

For all $\xi$ belonging to some subset $E$ of $\partial \D$ having full measure,  $\psi$ has nontangential limit  $\ne 1$ at $\xi$ while necessarily then $\Psi$ has nontangential limit $R(\psi(\xi))$ at $\rhp(\xi)$ (approach from the right halfplane).  Because $\rhp(\xi)$ lies on the imaginary axis and $\psi'(1)$ is positive, we conclude from (\ref{GTOOC}) that for all $\xi\in E$ , we have
\begin{equation}\label{GTOOC2}
\frac{1-|\psi(\xi)|^2}{|\psi(1) - \psi(\xi))|^2} = \Re\left(\frac{1}{\psi'(1)} + \frac{\psi''(1)}{\psi'(1)^2} - 1 + \Gamma(R(\xi))\right).
\end{equation}
Because $\Gamma(w) \rightarrow 0$ as $w\rightarrow \infty$,  we must have $\Gamma(R(\xi)) \rightarrow 0$ as $\xi \in E$ approaches $1$.  Thus, from (\ref{GTOOC2}), we see
$$
\frac{1-|\psi(\xi)|^2}{|\psi(1) - \psi(\xi))|^2}  \rightarrow  \Re\left(\frac{1}{\psi'(1)} + \frac{\psi''(1)}{\psi'(1)^2} - 1\right) , \ \text{as}\ \xi\in E \ \text{approaches} \ 1.
$$
Thus  $\frac{1-|\psi(\xi)|^2}{|\psi(1) - \psi(\xi))|^2}$ is essentially bounded above and away from $0$, as $\xi$ in $E$ approaches $1$,  if and only if  $ \Re\left(\frac{1}{\psi'(1)} + \frac{\psi''(1)}{\psi'(1)^2} - 1\right) > 0$, as desired.
\end{proof}

Remarks:
\begin{itemize}
\item[(a)] Observe that if $\phi$ has Denjoy-Wolff point $\dw\in \partial \D$,  $\phi'(\dw) =1$ (parabolic case), and $\phi\in C^2(\dw)$, then the second-order contact condition (\ref{CurvatureCondition}) reduces to simply:
$$
\Re(\dw \phi''(\dw)) > 0 \quad \text{iff}\ \phi \ \text{has order of contact 2 at $\dw$}.
$$
Thus, by the Parabolic-Type Test, for  $\phi\in C^2(\dw)$ of parabolic type,  we see that $\phi$ having order of contact $2$ at its  Denjoy-Wolff point $\dw$ is equivalent to $\phi$ being of parabolic non-automorphism type.  
\item[(b)]The condition (\ref{CurvatureCondition}) ensuring second-order contact has an appealing interpretation in terms of curvature.  If $\phi$ is holomorphic in a neighborhood of $\zeta$, then 
$$
\Re\left(\frac{1}{|\phi'(\zeta)|} + \frac{\zeta \phi''(\zeta)}{\phi'(\zeta)|\phi'(\zeta)|}\right)
$$
is the curvature of the parametric curve $t\rightarrow \phi( e^{it})$ at $t=\arg(\zeta)$ (see, e.g., \cite[Chapter 5, Section IX]{NB}).  Thus the second order contact condition at $\zeta$ is saying that the curvature of the image of the unit circle under $\phi$ at $\zeta$ exceeds 1,  the curvature of the unit circle. 
\end{itemize}

\subsection{Aleksandrov-Clark measures and the class $\ess(2)$}\label{ClarkMeasures}

Kriete and Moorhouse \cite[Section  5]{KMH} introduce the ``sufficient-data''  class $\ess(2)$ consisting of analytic selfmaps $\phi$ of $\D$ having, roughly speaking,  limited contact with $\partial \D$ that is both order-2 and $C^2$.   The rigorous definition of $\ess(2)$ requires the notion of Aleksandrov-Clark measures.    

 For each $\alpha \in \partial \D$, 
$$
z \mapsto \Re\left(\frac{\alpha + \phi(z)}{\alpha - \phi(z)}\right) = \frac{1- |\phi(z)|^2}{|\alpha - \phi(z)|^2}, z\in \D,
$$
is a positive, harmonic function; thus, there is a finite positive Borel measure $\mu_\alpha$ on $\partial \D$, called the Aleksandrov-Clark measure of $\phi$ at $\alpha$,  such that for each $z\in \D$,
$$
 \Re\left(\frac{\alpha + \phi(z)}{\alpha - \phi(z)}\right) = \int_{\partial \D} P_z(\xi)\, d\mu_\alpha(\xi),
$$
where $P_z(\xi) = (1-|z|^2)/|\xi - z|^2$ is the Poisson kernel at $z$. A good reference for Aleksandrov-Clark measures is \cite{CMR}.    

Connections between the compactness of the composition operator $C_\phi$ on $H^2(\D)$ and properties of $\phi$'s Aleksandrov-Clark measures were developed by Sarason \cite{SA}, Shapiro and Sundberg \cite{ShSb}, and Cima and Matheson \cite{CMa}, with Cima and Matheson establishing that the essential norm $\|C_\phi\|_e$ of $C_\phi$ is given by
\begin{equation}\label{CMC}
\|C_\phi\|_e^2 = \sup\{\mu^s_\alpha(\partial \D): \alpha \in \partial \D\},
\end{equation}
where for each $\alpha \in \partial\D$,   $\mu_\alpha^s$ is the singular part of $\mu_\alpha$ in its Lebesgue decomposition.

 Let $\alpha\in \partial\D$. The support of $\mu^s_\alpha$ is contained in the closure of the set $\phi^{-1}(\{\alpha\})$, consisting of those points on the unit circle where $\phi$ has nontangential limit $\alpha$.    The measure $\mu^s_\alpha$ decomposes into a sum of a pure point measure 
\begin{equation}\label{ppd}
\mu_{\alpha}^{pp} = \sum_{\phi(\zeta)  =\alpha} \frac{1}{|\phi'(\zeta)|} \delta_\zeta
\end{equation}
and a continuous singular measure.  In the preceding formula for $\mu_\alpha^{pp}$, $\phi'(\zeta)$ is the angular derivative of $\phi$ at $\zeta$ taken to be $\infty$ if it does not exist.   Let 
$$
F(\phi) = \{\zeta\in \partial \D: \phi\ \text{has finite angular derivative at}\ \zeta\}
$$
and observe that by (\ref{ppd}), if $\zeta\in F(\phi)$, then  $\mu_{\phi(\zeta)}^{pp}$ is not the zero measure.  

Let 
$$
E(\phi) = \left( \cup_{\alpha\in \partial \D}\ \text{support}(\mu_\alpha^s)\vstrut \right)^-,
$$
so that $E(\phi)$ is the closure of the union of the (closed)  support sets of the singular parts of the Aleksandrov-Clark measures for $\phi$.    We will be concerned only with  $\phi$ such that  $E(\phi)$ is finite.  This means, in particular, that for each $\alpha \in \partial \D$, $\mu_\alpha^s$ is a pure point measure supported on  the set of those $\zeta\in \partial\D$ such $\phi$ has finite angular derivative at $\zeta$ and $\phi(\zeta) = \alpha$.  Thus, our assumption that $E(\phi)$ is finite means $F(\phi)$   is finite and
$$
E(\phi) = F(\phi).
$$

If $E(\phi)$ is empty, then $C_\phi$ is compact  by (\ref{CMC}).  Note that $E(\phi)$ is certainly empty if $\|\phi\|_\infty:= \sup \{|\phi(z)|: z\in \D\} < 1$; thus, $C_\phi$ is compact in this situation (for an elementary proof, see \cite[p.\ 23]{Sh2}).    

  Kriete and Moorhouse's sufficient-data class $\ess(2)$ consists of those analytic selfmaps $\phi$ of $\D$ such that 
  \begin{itemize}
  \item[(i)]  $\phi$ has radial limit of modulus less than $1$ at almost every point of $\partial \D$;
  \item[(ii)] $E(\phi)$ is finite (so that $E(\phi) = F(\phi)$);
  \item[(iii)] for each $\zeta\in E(\phi)$, $\phi$ has order of contact $2$ at $\zeta$;
  \item[(iv)] for each $\zeta\in E(\phi)$, $\phi\in C^2(\zeta)$ (so that $\phi$ has derivative data at  $\zeta$ sufficient to match its order of contact at $\zeta$). 
  \end{itemize}
We remark that it's possible to show condition (ii) above implies condition (i); for instance, an argument can based on the Aleksandrov Disintegration Theorem (see, e.g., \cite[Section 9.3]{CMR}).

\begin{defin}  Let $\phi\in C^2(\zeta)$ for some $\zeta\in \partial \D$.  The second-order data of $\phi$ at $\zeta$, denoted $D_2(\phi,\zeta)$, is given by
$$
D_2(\phi, \zeta) = (\phi(\zeta), \phi'(\zeta), \phi''(\zeta)).
$$
\end{defin}

  Generalizing results in \cite[Section 7]{BLNS}, Kriete and Moorhouse \cite[Corollary 5.16]{KM}\ establish  the following:
  
\begin{theorem}[Kriete-Moorhouse]\label{KMS2T}  Let $\phi\in \ess(2)$ with $E(\phi)= \{\zeta_1, \zeta_2, \ldots, \zeta_n\}$.  For $j = 1, \ldots, n$, let $\psi_j$ be the unique linear-fractional selfmap of $\D$ such that $D_2(\psi_j, \zeta_j) = D_2(\phi, \zeta_j)$.  Then 
\begin{equation}\label{KMR}
C_\phi = C_{\psi_1} +  \cdots + C_{\psi_n} + K,
\end{equation}
where $K$ is a compact operator on $H^2(\D)$.  
\end{theorem}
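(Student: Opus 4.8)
The plan is to reduce the identity~(\ref{KMR}) to the single assertion that
\[
T := C_\phi - \sum_{j=1}^n C_{\psi_j}
\]
is compact, and to prove \emph{that} by localizing at the finite set $E(\phi) = \{\zeta_1,\dots,\zeta_n\}$. The engine is the reproducing-kernel description of adjoints: writing $k_w$ for the Szeg\H{o} kernel at $w\in\D$, one has $C_\psi^* k_w = k_{\psi(w)}$ for every analytic selfmap $\psi$, so
\[
T^* k_w = k_{\phi(w)} - \sum_{j=1}^n k_{\psi_j(w)}.
\]
Because $k_w/\|k_w\|\to 0$ weakly as $|w|\to 1^-$, a necessary condition for compactness is that $\|T^* k_w\|/\|k_w\|\to 0$ as $w\to\partial\D$; I would turn this into sufficiency by invoking the Carleson-/singular-measure analysis behind the essential-norm formula~(\ref{CMC}), in the form extending it from a single $C_\phi$ to a combination of composition operators, so that $\|T\|_e$ is governed entirely by the boundary behavior of the symbols.

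First I would dispose of the behavior away from $E(\phi)$. By (i) and (ii), $\phi$ attains modulus $1$ nontangentially with a finite angular derivative only at $\zeta_1,\dots,\zeta_n$, while each linear-fractional non-automorphism $\psi_j$ touches $\partial\D$ only at the single point $\zeta_j$. Hence for $w$ approaching a boundary point $\xi\notin E(\phi)$, each of $\phi(w),\psi_1(w),\dots,\psi_n(w)$ stays in a compact subset of $\D$, every kernel remains bounded, and $T^*k_w/\|k_w\|\to 0$ trivially. Since the $\zeta_j$ are distinct, this also separates the problem at the boundary: as $w\to\zeta_i$, only $k_{\phi(w)}$ and $k_{\psi_i(w)}$ blow up (the other $k_{\psi_j(w)}$, $j\ne i$, stay bounded because $\psi_j(\zeta_i)$ lies inside $\D$), so modulo a bounded contribution $T^*k_w$ reduces to $k_{\phi(w)}-k_{\psi_i(w)}$. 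It therefore suffices to prove, for each fixed $i$, that $C_\phi-C_{\psi_i}$ is compactly negligible as $w\to\zeta_i$.

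The heart of the argument is this local estimate, and it is where I expect the main difficulty. Fix $j$ and write $\zeta=\zeta_j$, $\psi=\psi_j$, $\alpha=\phi(\zeta)=\psi(\zeta)\in\partial\D$. Since $\phi\in C^2(\zeta)$ and $D_2(\psi,\zeta)=D_2(\phi,\zeta)$, the two maps agree through second order, so
\[
\phi(z)-\psi(z)=o(|z-\zeta|^2)\quad\text{as }z\to\zeta.
\]
The finiteness of the angular derivative $\phi'(\zeta)$ keeps the kernel ratio $(1-|w|^2)/(1-|\phi(w)|^2)$ bounded on nontangential approach to $\zeta$, and the matching of data gives $1-|\psi(w)|^2\asymp 1-|\phi(w)|^2$, hence $|1-\overline{\psi(w)}\phi(w)|\asymp 1-|\phi(w)|^2\asymp 1-|w|^2$. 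I would then read off from the identity
\[
\frac{\|(C_\phi-C_\psi)^* k_w\|^2}{\|k_w\|^2}=(1-|w|^2)\left(\frac{1}{1-|\phi(w)|^2}+\frac{1}{1-|\psi(w)|^2}-2\Re\frac{1}{1-\overline{\psi(w)}\phi(w)}\right)
\]
that the right-hand side is controlled by the pseudohyperbolic distance
\[
\rho(\phi(w),\psi(w))=\left|\frac{\phi(w)-\psi(w)}{1-\overline{\psi(w)}\,\phi(w)}\right|,
\]
whose numerator is $o(|w-\zeta|^2)$ and whose denominator is comparable to $1-|w|^2$ (of order $|w-\zeta|$ within a Stolz angle). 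Thus $\rho(\phi(w),\psi(w))\to 0$, the bracketed term vanishes, and the quotient tends to $0$ as $w\to\zeta$.

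Combining the off-support estimate with the local estimates at each $\zeta_j$ gives $\|T\|_e=0$, i.e.\ $T$ is compact, which is~(\ref{KMR}). The two delicate points — and the reason the defining hypotheses of $\ess(2)$ are exactly calibrated — are as follows. First, one must upgrade the pointwise kernel estimates to a genuine essential-norm statement: the reproducing-kernel test is only visibly necessary, and turning it into sufficiency requires the rigorous Carleson-/Aleksandrov-Clark-measure machinery generalizing~(\ref{CMC}) to sums of composition operators, handled so that the separated supports at distinct $\zeta_j$ do not interfere. Second, one must justify the comparisons $1-|\phi(w)|^2\asymp 1-|w|^2$ and $|1-\overline{\psi(w)}\phi(w)|\asymp 1-|\phi(w)|^2$ on the correct approach regions; here the order-of-contact-$2$ hypothesis (iii), in the quantitative form behind Proposition~\ref{OCT}, supplies the two-sided control (its \emph{bounded above} and \emph{bounded away from zero} halves), while condition (iv) supplies exactly the $C^2$ data needed to make the second-order cancellation $\phi-\psi=o(|z-\zeta|^2)$ legitimate.
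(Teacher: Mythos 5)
A preliminary remark: the paper does not actually prove this statement. It is quoted verbatim from Kriete and Moorhouse \cite[Corollary 5.16]{KMH}, so the only fair comparison is with their method, which is a sustained Aleksandrov--Clark/Carleson measure analysis.

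Your reduction to the compactness of $T = C_\phi - \sum_{j=1}^n C_{\psi_j}$, the localization at the finitely many points of $E(\phi)$, and the pointwise kernel computation are all reasonable, and the limit $\|T^*k_w\|/\|k_w\|\to 0$ as $|w|\to 1^-$ can indeed be extracted along the lines you sketch (with some repair: you need the ratio $(1-|\phi(w)|^2)/(1-|\psi_i(w)|^2)$ to tend to $1$ and the argument of $1-\overline{\psi_i(w)}\phi(w)$ to tend to $0$, not merely two-sided comparability, since the normalized quantity is of the form $(\sqrt{A}-\sqrt{B})^2 + 2\sqrt{AB}\,(1-\cos\theta)$; both refinements do follow from $\phi-\psi_i = o(|z-\zeta_i|^2)$ together with $1-|\psi_i(w)|\gtrsim |w-\zeta_i|^2$). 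The genuine gap is the step you flag and then defer: decay of $\|T^*k_w\|/\|k_w\|$ on reproducing kernels is \emph{necessary} but emphatically \emph{not sufficient} for compactness, and there is no off-the-shelf ``extension of (\ref{CMC}) to linear combinations of composition operators'' that one can invoke. To see that the bridge you are assuming is false in general, note that for a single composition operator the kernel condition $\|C_\phi^*k_w\|/\|k_w\|\to 0$ says exactly that $\phi$ has no finite angular derivative at any point of $\partial\D$, yet there are inner functions with this property, and an inner symbol never induces a compact (or even small essential norm) composition operator. That is precisely the phenomenon conditions (i) and (ii) in the definition of $\ess(2)$ are designed to exclude, and exploiting them requires working with the singular parts of the Clark measures themselves, not with kernel asymptotics.

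So your proposal, as written, establishes only the necessary kernel condition and names the hard implication without carrying it out. Filling that hole is not a routine appeal to known machinery: building an essential-norm upper bound for sums and differences of composition operators out of Clark/Carleson measure data, localized so that the contributions at the distinct points $\zeta_1,\dots,\zeta_n$ can be handled separately, is the actual content of Kriete and Moorhouse's proof and occupies most of their paper. A corrected write-up would either reproduce that analysis or cite it, at which point the theorem is being quoted rather than proved --- which is exactly what the present paper does.
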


 Note well that the linear-fractional maps $\psi_j$ of (\ref{KMR}) are necessarily non-automorphic selfmaps of $\D$. One way to see this is to observe that $z\mapsto \overline{\phi(\zeta_j)}\psi_j(\zeta_j z)$ and $z\mapsto  \overline{\phi(\zeta_j)}\phi(\zeta_j z)$ have the same second-order data at their common fixed point $1$. Thus, the first two terms in their right-halfplane-incarnation expansions, $a w + b +\cdots$, agree;  moreover, $\Re(b) >0$ because $\phi$ has second-order contact at $\zeta_j$.  However, $aw + b$ is the complete right-halfplane incarnation of the linear-fractional map $z\mapsto \overline{\phi(\zeta_j)}\psi_j(\zeta_j z)$, and since $\Re(b) >0$, we see that $\psi_j$ is not an automorphism of $\D$.  
\begin{example} \label{MLPS2} The mapping $\phi_{lp}$ of (\ref{lpmap}) belongs to $\ess(2)$, with $E(\phi_{lp}) = F(\phi_{lp})= \{1, -1\}$.  Because $\phi_{lp}$ is analytic on the closed unit disk, we certainly have $\phi_{lp}\in C^2(1)\cap C^2(-1)$.  We have already noted that $\phi_{lp}$ is of parabolic non-automorphism type with Denjoy-Wolff point $1$; thus, $\phi$ has order of contact $2$ at $1$ by the remarks following the proof of Proposition~\ref{OCT}.  We employ Proposition~\ref{OCT} to show that $\phi_{lp}$ has order of contact $2$ at $-1$:
$$
\Re\left(\frac{1}{|\phi_{lp}'(-1)|} + \frac{-1\phi_{lp}''(-1)}{\phi_{lp}'(-1)|\phi_{lp}'(-1)|} - 1 \right) = \frac{1}{9} + \frac{80}{81} - 1  > 0.
$$
A computation shows that $\psi_1(z) = (4-3z)/(5-4z)$ is a selfmap of $\D$ whose second-order data at $1$ agrees with that of $\phi_{lp}$ at $1$ and $\psi_2(z) = (41z+32)/(40z + 49)$ is a selfmap of $\D$ that satisfies $D_2(\psi_2, -1) = D_2(\phi_{lp}, -1)$.  Thus by Theorem~\ref{KMS2T}, $C_{\phi_{lp}}$ and $C_{\psi_1} + C_{\psi_2}$ differ by a compact operator.
\end{example}
\subsection{Spectral preliminaries} Let $H$ be a complex Hilbert space and $T: H\rightarrow H$ be a bounded linear operator. 
The {\it spectrum} $\sigma(T)$ of $T$ is given by
$$
\sigma(T)= \{\lambda \in \C: T-\lambda I\ \text{is not invertible}\}.
$$
Observe that $1$ belongs to the spectrum of every composition operator---it is, in fact, an eigenvalue because if $f$ is a constant function, then $C_\phi f = f.$  

The {\it essential spectrum} $\sigma_e(T)$ of $T$ is given by
$$
\sigma_e(T)= \{\lambda \in \C: T-\lambda I\ \text{is not Fredholm}\}.
$$
Recall that $T: H\rightarrow H$ is {\it Fredholm} provided $T$ has closed range while ker$(T)$ and ker$(T^*)$ are finite dimensional.  Alternatively, the Fredholm operators are those representing invertible elements in the {\it Calkin Algebra}, $B(H)/B_0(H)$, where $B(H)$ is the collection of bounded linear operators on $H$ and $B_0(H)$ is the ideal of compact operators on $H$.   Thus $T\in B(H)$ is Fredholm if and only if $[T]:= T + B_0(H)$ is invertible in the Calkin Algebra.  

Throughout this paper, we let $[T]$ denote the equivalence class of $T$ in the Calkin algebra. Thus, e.g., we have $[C_{\phi_{lp}}] = [C_{\psi_1} + C_{\psi_2}] = [C_{\psi_1}] + [C_{\psi_2}]  $, where $\phi_{lp}$, $\psi_1$, and $\psi_2$ are given by (\ref{lpmap}) and (\ref{psi12}), respectively.

We let $r(T)=\max\{|\lambda|: \lambda \in \sigma(T)\}$ denote the spectral radius of $T$, and $r_e(T) = \max\{|\lambda|: \lambda\in \sigma_e(T)\}$, the essential spectral radius of $T$.  For Hardy-space composition operators, Cowen \cite[Theorem 2.1]{Cow2} proves the following:
\begin{quotation}
{\it Let $\omega$ be the Denjoy-Wolff point of $\phi$.  If $\omega\in \D$, then $r(C_\phi) = 1$; otherwise, $r(C_\phi) = \frac{1}{\sqrt{\phi'(\omega)}}$.}
\end{quotation}
If $\omega\in\partial \D$, then $r_e(C_\phi) = r(C_\phi)$ (see \cite[Lemma 5.2]{BSM}). If $\omega \in \D$, then useful formulas for $r_e(C_\phi)$ exist in case $\phi$ is analytic on the closed disk $\D^-$ (\cite{KM} and \cite[p.\ 296]{CMB}) or univalent on the open disk $\D$ (\cite{CMSP}).  For instance, if $\phi$ is univalent on $\D$, we have
$$
r_e(C_\phi) = \lim_{k\rightarrow \infty} \left(\sup\left\{\frac{1}{|(\phi^{[k]})'(\zeta)|}: \zeta\in \partial \D\right\}\right)^{1/(2k)}, 
$$
where $(\phi^{[k]})'(\zeta)$ is the angular derivative of $\phi^{[k]}$ at $\zeta$ (taken to be $\infty$ when it does not exist). Suppose that $\phi$ is linear-fractional; then the preceding essential-spectral radius formula is easy to apply.  If $\phi$ doesn't contact the boundary  (so that $\|\phi\|_\infty < 1$) or if it contacts the boundary at a point that is not fixed (so that $\|\phi^{[2]}\|_\infty < 1$), then, respectively, $C_\phi$ or $C_\phi^2 = C_{\phi^{[2]}}$ is compact, and, in either case, $r_e(C_\phi) = 0$. If $\phi$ fixes $\zeta_0\in \partial \D$, then $\phi'(\zeta_0)$ is positive, $(\phi^{[k]})'(\zeta_0) = \phi'(\zeta_0)^k$ for every $k\ge 1$, and $(\phi^{[k]})'(\zeta) = \infty$ for $\zeta\in \partial \D\setminus\{\zeta_0\}$; thus, in this case, $r_e(C_\phi) = 1/\sqrt{\phi'(\zeta_0)}$.  

As we just noted, if $\phi$ is linear fractional and takes $\zeta\in \partial\D$ to $\eta\in \partial \D$ and $\eta\ne \zeta$, then $\|\phi^{[2]}\|_\infty < 1$ and $C_\phi^2$ is compact.  We make frequent use of a generalization of this observation. Suppose that $\phi_1$ and $\phi_2$ are linear-fractional, non-automorphic selfmaps of $\D$ such that $\phi_1(\zeta_1) = \eta_1$, and $\phi_2(\zeta_2) = \eta_2$, where $\zeta_1, \zeta_2, \eta_1$, and $\eta_2$ belong to $\partial \D$;  then if $\eta_2 \ne \zeta_1$,   we have $\|\phi_1\circ\phi_2\|_\infty < 1$  so that $C_{\phi_2}C_{\phi_1}$ is compact.

We now summarize results of a number of authors  characterizing the spectra and essential spectra of composition operators on $H^2(\D)$ whose symbols are non-automorphic linear-fractional selfmaps of $\D$.
\begin{theorem}\label{LFCS} Let $\phi$ be a linear-fractional selfmap of $\D$ that is not an automorphism of $\D$ and let $\omega$ be its Denjoy-Wolff point.  
\begin{itemize}
\item[(a)]   Suppose $\omega \in \D$ and $\phi$ does not fix a point on $\partial \D$. Then either $C_\phi$ or  $(C_\phi)^2$ is compact and we have
$$
\sigma_e(C_\phi) = \{0\} \quad \text{and}\quad  \sigma(C_\phi) = \{\phi'(\omega)^n: n =0,1,2, \ldots\} \cup\{0\}.
$$
{\rm (Caughran and Schwartz \cite[Theorem 3]{CSw})}
\item[(b)] Suppose $\omega \in \D$ and $\phi$ fixes a point $\zeta_0$ on $\partial \D$. Then
$
\sigma_e(C_\phi) = \left\{z: |z| \le1/\sqrt{\phi'(\zeta_0)}\right\}$ and
$$
  \sigma(C_\phi) =\left\{z: |z| \le \frac{1}{\sqrt{\phi'(\zeta_0)}}\right\} \cup\{\phi'(\omega)^n: n = 0, 1, 2, \ldots\}.
$$
{\rm (Kamowitz \cite[Theorem A(3)]{KM}, Cowen \cite[Proof of Theorem 5 ]{Cow}; see also \cite[Theorem 3.2]{BSp}.)}
\item[(c)] Suppose that $\omega\in \partial \D$ and $\phi'(\omega) < 1$ {\rm (}so that $\phi$ is of hyperbolic type{\rm)}.  Then
$$
\sigma_e(C_\phi) = \sigma(C_\phi) = \left\{z: |z| \le \frac{1}{\sqrt{\phi'(\omega)}}\right\}.
$$
{\rm (See Deddens \cite[Theorem 3(iv)]{DD} and note that $C_\phi$ is similar to $C_{az + (1-a)}$ where $a = \phi'(\omega)$.  Each point in the punctured disk $\{z: 0< |z| < 1/\sqrt{a}\}$ is an infinite multiplicity eigenvalue of $C_{az + (1-a)}$ with eigenfunctions of the form $z\mapsto (1-z)^\beta$, where $\Re(\beta) > -1/2$. Thus the essential spectrum and spectrum coincide.)}
\item[(d)]  Suppose that $\omega\in \partial \D$ and $\phi'(\omega) = 1$ $($so that $\phi$ is of parabolic non-automorphism type$)$. Let $a= \omega\phi''(\omega)$, which has positive real part.  Then
$$
\sigma_e(C_\phi) = \sigma(C_\phi) = \left\{e^{-at}: t\ge 0\right\} \cup \{0\}.
$$
{\rm (Cowen (\cite[Corollary 6.2]{Cow2}) applied to $C_\psi$, where $\psi(z) = \bar{\omega}\phi(\omega z)$; $C_\psi$ is similar to $C_\phi$.)}
\end{itemize}
\end{theorem}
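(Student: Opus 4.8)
The plan is to treat the four cases separately, in each reducing---via a similarity that moves the distinguished fixed point of $\phi$ to a standard location---to a normalized linear-fractional symbol for which the spectral computation is already available in the literature, and then to use the spectral-radius and essential-spectral-radius formulas recorded above to pin down the outer boundary of each spectrum. Since similarity preserves both $\sigma$ and $\sigma_e$, and since conjugating $C_\phi$ by an invertible composition operator $C_\tau$ (with $\tau$ an automorphism) produces $C_{\tau^{-1}\circ\phi\circ\tau}$, such normalizations are harmless.

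First I would dispose of case (a). Because $\phi$ has interior Denjoy-Wolff point and fixes no boundary point, a non-automorphic linear-fractional map contacts $\partial\D$ in at most one point, and that point is not fixed; hence either $\|\phi\|_\infty<1$, so $C_\phi$ is compact, or $\|\phi^{[2]}\|_\infty<1$, so $C_\phi^2$ is compact. In either case $C_\phi$ is power-compact, which forces $\sigma_e(C_\phi)=\{0\}$ and makes the nonzero spectrum a sequence of finite-multiplicity eigenvalues accumulating only at $0$; identifying these eigenvalues as $\{\phi'(\omega)^n\}$ is exactly the Caughran--Schwartz computation. For case (b) the boundary fixed point $\zeta_0$ destroys power-compactness, so I would instead apply the univalent essential-spectral-radius formula: since $(\phi^{[k]})'(\zeta_0)=\phi'(\zeta_0)^k$ while $(\phi^{[k]})'(\zeta)=\infty$ for all other boundary $\zeta$, that formula yields $r_e(C_\phi)=1/\sqrt{\phi'(\zeta_0)}$, and Kamowitz and Cowen supply the fact that $\sigma_e$ is the entire closed disk of that radius, with the interior eigenvalues $\phi'(\omega)^n$ adjoined to form $\sigma$.

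For cases (c) and (d) the Denjoy-Wolff point lies on $\partial\D$, so the identity $r_e(C_\phi)=r(C_\phi)=1/\sqrt{\phi'(\omega)}$ recorded above immediately caps both spectra (and confines them to the closed unit disk when $\phi'(\omega)=1$). I would normalize by replacing $\phi$ with $\psi(z)=\bar\omega\,\phi(\omega z)$, fixing $1$, and pass to the right-halfplane incarnation $\Psi=R\circ\psi\circ R^{-1}$. In the hyperbolic case Deddens' similarity to $C_{az+(1-a)}$ with $a=\phi'(\omega)<1$ makes the computation explicit: $(1-z)^\beta$ is an eigenfunction with eigenvalue $a^\beta$, and as $\beta$ runs over $\Re(\beta)>-1/2$ (the range guaranteeing $H^2$ membership) the values $a^\beta$ sweep out the entire punctured disk $\{0<|z|<1/\sqrt a\}$; taking closure gives the full closed disk. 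Crucially, for each such eigenvalue there are infinitely many admissible $\beta$ (differing by integer multiples of $2\pi i/\log a$), so every eigenvalue has infinite multiplicity and therefore lies in $\sigma_e$ as well---this is what forces $\sigma_e=\sigma$. In the parabolic case $\Psi(w)=w+a$ with $\Re(a)>0$ (the positivity being precisely the order-of-contact-$2$ conclusion of Section 2), and I would invoke Cowen's Corollary 6.2 directly to obtain both $\sigma$ and $\sigma_e$ as $\{e^{-at}:t\ge0\}\cup\{0\}$.

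The main obstacle is not the outer boundary---which the radius formulas settle cleanly---but the assertion that $\sigma_e$ equals the \emph{full} spectrum in the boundary cases, i.e.\ that the disk (resp.\ spiral) has no interior gaps and that every boundary point is genuinely attained. The decisive input is the infinite-multiplicity eigenvalue argument sketched above for case (c): one must verify that the eigenfunctions $(1-z)^\beta$ truly belong to $H^2(\D)$ exactly when $\Re(\beta)>-1/2$ and that their eigenvalues cover the whole open disk, so that $C_\phi-\lambda I$ can be Fredholm for no interior $\lambda$. In case (d) the analogous filling of the spiral is what Cowen's Corollary 6.2 provides, and the only remaining point to confirm is that $a=\omega\phi''(\omega)$ has strictly positive real part, guaranteeing that $\{e^{-at}:t\ge0\}$ is a bona fide logarithmic spiral (a segment when $a$ is real) terminating at $0$.
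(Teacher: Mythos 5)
Your proposal is correct and takes essentially the same route as the paper: both treat the theorem as a compilation of known results, citing Caughran--Schwartz for (a) via power-compactness, Kamowitz/Cowen for (b), Deddens' similarity to $C_{az+(1-a)}$ together with the infinite-multiplicity eigenfunctions $(1-z)^\beta$, $\Re(\beta)>-1/2$, for (c), and Cowen's Corollary 6.2 applied to the normalized symbol $\psi(z)=\bar\omega\phi(\omega z)$ for (d). The one small difference is that in case (d) the paper does not attribute the equality $\sigma_e(C_\phi)=\sigma(C_\phi)$ to Cowen's corollary itself (which gives the spectrum); it supplies the extra observation that every point of the spiral is a non-isolated boundary point of the spectrum and hence lies in the essential spectrum, a step you should make explicit rather than fold into the citation.
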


Note that in (d) of the preceding theorem we have equality of spectrum and essential spectrum because every point of the spectrum is a non-isolated boundary point of spectrum. See, e.g., \cite[Theorem 6.8, p.\ 366]{Con}. Note also we have the following easy corollary of Theorem~\ref{LFCS}.
\begin{cor}\label{DWDDES} Suppose that $\phi$ is a non-automorphic linear-fractional selfmap of $\D$ such that $\phi$ fixes a point $\zeta$ on $\partial \D$ and $\phi'(\zeta) >1$. Then  
$$
\sigma_e(C_\phi) = \left\{z:|z|\le \frac{1}{\sqrt{\phi'(\zeta)}}\right\}.
$$
In particular, the derivative of $\phi$ at its fixed point on $\partial \D$ completely determines the essential spectrum of $C_\phi$.  
\end{cor}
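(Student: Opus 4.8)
The plan is to reduce to case (b) of Theorem~\ref{LFCS}: I will show the hypotheses force the Denjoy-Wolff point $\omega$ of $\phi$ to lie in $\D$, with $\zeta$ serving as the (unique) boundary fixed point, at which point (b) delivers the stated formula with $\zeta_0 = \zeta$. The content of the corollary is entirely in locating $\omega$; once $\omega\in\D$ is known, the formula is immediate.

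First I would normalize the boundary fixed point to be $1$. Put $\psi(z) = \bar\zeta\,\phi(\zeta z)$; then $C_\psi = C_\rho C_\phi C_\rho^{-1}$, where $\rho(z) = \zeta z$ induces a unitary composition operator on $H^2(\D)$, so $\sigma_e(C_\phi) = \sigma_e(C_\psi)$. The map $\psi$ is again a non-automorphic linear-fractional selfmap of $\D$, it fixes $1$, and $\psi'(1) = \phi'(\zeta) > 1$. Hence it suffices to compute $\sigma_e(C_\psi)$.

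Next I would pass to the right-halfplane incarnation $\Psi = R\circ\psi\circ R^{-1}$. Since $\psi$ fixes $1$ and $R(1) = \infty$, the map $\Psi$ fixes $\infty$ and is therefore affine, $\Psi(w) = Aw + B$; by the expansion (\ref{RHPI}) its leading coefficient is $A = 1/\psi'(1) = 1/\phi'(\zeta)$, which lies in $(0,1)$ and is real because the boundary angular derivative $\phi'(\zeta)$ is positive. The selfmap condition $\Re\Psi(w) > 0$ for $\Re w > 0$ forces $\Re B \ge 0$, and since the image of the right halfplane under $w\mapsto Aw + B$ is the halfplane $\{w : \Re w > \Re B\}$, the map $\Psi$ is an automorphism of the halfplane exactly when $\Re B = 0$. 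As $\phi$ is not an automorphism, $\Re B > 0$. The second fixed point of $\Psi$ is $w_0 = B/(1-A)$, and $\Re w_0 = \Re B/(1-A) > 0$ since $A \in (0,1)$; thus $\omega := R^{-1}(w_0)$ lies in the open disk $\D$ and is fixed by $\psi$. Because $\psi$ cannot have $1$ as its Denjoy-Wolff point (a boundary Denjoy-Wolff point has derivative at most $1$, whereas $\psi'(1) > 1$), the interior fixed point $\omega$ must be the Denjoy-Wolff point of $\psi$.

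With $\omega \in \D$ and $\psi$ fixing the boundary point $1$, Theorem~\ref{LFCS}(b) applies with $\zeta_0 = 1$ and gives $\sigma_e(C_\psi) = \{z : |z| \le 1/\sqrt{\psi'(1)}\}$; since $\psi'(1) = \phi'(\zeta)$ and $\sigma_e(C_\phi) = \sigma_e(C_\psi)$, this is exactly the asserted conclusion. The one subtlety—and the sole place the argument could fail—is distinguishing this situation from the hyperbolic case (c), in which the essential-spectral radius would instead be $\sqrt{\phi'(\zeta)} > 1$; the halfplane computation settles this cleanly, since $\Re w_0 > 0$ places the second fixed point strictly inside $\D$ rather than on $\partial\D$, so a non-automorphic $\phi$ cannot realize case (c) while fixing a boundary point with derivative exceeding $1$.
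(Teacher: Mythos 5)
Your proof is correct, and it ends where the paper's does: once the Denjoy--Wolff point is known to lie in $\D$, Theorem~\ref{LFCS}(b) with $\zeta_0=\zeta$ gives the stated formula. The two arguments differ only in how they locate the Denjoy--Wolff point. The paper's proof is soft and takes two sentences: a non-automorphic linear-fractional selfmap of $\D$ can contact $\partial\D$ at most at the single point $\zeta$ (otherwise its image disk would have to be all of $\D$), so a boundary Denjoy--Wolff point could only be $\zeta$ itself, which is ruled out by $\phi'(\zeta)>1$; hence the Denjoy--Wolff point is interior. You instead normalize $\zeta$ to $1$ by a unitary conjugation, pass to the halfplane incarnation $\Psi(w)=Aw+B$, and exhibit the interior fixed point $w_0=B/(1-A)$ explicitly, using non-automorphy to get $\Re B>0$. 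Your computation is self-contained---it never invokes the one-contact-point fact---and it produces the second fixed point concretely, which also makes transparent why the hyperbolic case (c) of Theorem~\ref{LFCS} cannot occur under these hypotheses; the cost is the extra normalization and bookkeeping, which the paper's qualitative argument avoids entirely.
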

\begin{proof} Because $\phi$ is a non-automorphic linear-fractional selfmap of $\D$, it contacts $\partial\D$ only at $\zeta$ and because $\phi'(\zeta) > 1$, $\zeta$ can't be the Denjoy-Wolff point of $\phi$.  Thus, $\phi$ must have its Denjoy-Wolff point inside $\D$ and part (b) of Theorem~\ref{LFCS} now yields this corollary.
\end{proof} 

To illustrate Theorem~\ref{LFCS}, we consider the linear-fractional selfmaps $\psi_1$ and $\psi_2$ given by (\ref{psi12}), both of which are not automorphisms.  Because $\psi_1(1) =1$ and $\psi_1'(1) =1$, we see (d) applies.  Also $\psi_1''(1) = 8$. Thus $\sigma_e(C_{\psi_1}) = \sigma(C_{\psi_1}) = [0,1]$.   Note  $\psi_2(-1) = -1$ and $\psi_2'(-1) = 9$; so by Corollary~\ref{DWDDES}, we have $\sigma_e(C_{\psi_2}) = \left\{z: |z| \le \frac{1}{3}\right\}$.   Note $\psi_2(4/5)= 4/5$ so that the Denjoy-Wolff point of $\psi_2$ is $4/5$; also, $\psi_2'(4/5) = 1/9$. Thus by part (b) of Theorem~\ref{LFCS}, we have 
$$
\sigma_e(C_{\psi_2}) = \left\{z: |z| \le \frac{1}{3}\right\} \quad {\rm and}\quad \sigma(C_{\psi_2}) =  \left\{z: |z| \le \frac{1}{3}\right\}\cup \{1\}.
$$

We have seen that $[C_{\phi_{lp}}] = [C_{\psi_1}] + [C_{\psi_2}]$, that is, $C_{\phi_{lp}}$ and $C_{\psi_1} + C_{\psi_2}$ represent the same equivalence class in the Calkin algebra.  We are interested in the essential spectrum of $C_{\phi_{lp}}$; equivalently, the spectrum of the sum  $[C_{\psi_1}] + [C_{\psi_2}]$ in the Calkin Algebra.  Because $\psi_1$ and $\psi_2$ are linear-fractional non-automorphic maps fixing distinct points on the unit circle, both  $\psi_1\circ \psi_2$ and $\psi_2\circ \psi_1$ have $H^\infty(\D)$ norm less than $1$; hence $C_{\psi_1\circ\psi_2}$ and $C_{\psi_2\circ \psi_1}$ are compact.  Equivalently, if $T_1 = [C_{\psi_1}]$ and $T_2 = [C_{\psi_2}]$, then  $T_2T_1=0$ and $T_1T_2 = 0$.   Thus we see that the Calkin-algebra elements $T_1$ and $T_2$ satisfy the following annihilation relations:
\begin{equation}\label{BAR}
 T_1 T_2 = T_2 T_1 = 0.
 \end{equation}
 This observation motivates the results in the next section, which will be used in Section 4 to  characterize the spectra of composition operators with symbols in $\ess(2)$.   Proposition~\ref{TA}, e.g., shows that if $T_1$ and $T_2$ are elements of any unital algebra over a field satisfying (\ref{BAR}), then $\sigma(T_1 +T_2)\setminus\{0\} = \left(\sigma(T_1) \cup \sigma(T_2)\right)\setminus\{0\}$; hence, applying this result in the context of the Calkin Algebra, we see that the set of nonzero points in $\sigma_e(C_{\phi_{lp}}) = \sigma_e\left(C_{\psi_1} + C_{\psi_2}\right)$ equals the set of  nonzero points in $\sigma_e(C_{\psi_1}) \cup \sigma_e(C_{\psi_2})= [0,1] \cup \{z: |z| \le 1/3\} $. Because the essential spectrum is closed,  we conclude that the essential spectrum of $C_{\phi_{lp}}$ is $[0,1] \cup \{z: |z| \le 1/3\}$ (moreover, the essential spectrum of $C_{\phi_{lp}}$ equals its spectrum---see Section 4).

\section{Spectra of Sums Whose Summands Satisfy Certain Annihilation Conditions}

Throughout this section, $\Al$ denotes a unital algebra over a field.  

 \begin{lemma}\label{FL}  Suppose that  $a_1$ and $a_2$ are elements of $\Al$ such that $a_1a_2 = 0$.  Then
\begin{equation}\label{STI}
\sigma(a_1 + a_2) \subseteq  \sigma(a_1) \cup \sigma(a_2).
\end{equation}
 \end{lemma}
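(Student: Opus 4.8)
The plan is to establish the inclusion by contraposition on complements: I will show that any $\lambda$ in the field lying outside $\sigma(a_1) \cup \sigma(a_2)$ also lies outside $\sigma(a_1 + a_2)$. Concretely, assuming both $a_1 - \lambda 1$ and $a_2 - \lambda 1$ are invertible in $\Al$, I must exhibit an inverse for $(a_1 + a_2) - \lambda 1$, which then yields \eqref{STI}.

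The engine of the argument is a single algebraic identity exploiting the hypothesis $a_1 a_2 = 0$. Expanding the product and discarding the quadratic term, I obtain
$$(a_1 - \lambda 1)(a_2 - \lambda 1) = a_1 a_2 - \lambda a_1 - \lambda a_2 + \lambda^2 1 = -\lambda\bigl((a_1 + a_2) - \lambda 1\bigr).$$
Thus, whenever $\lambda \neq 0$, the element $(a_1 + a_2) - \lambda 1$ is precisely $-\lambda^{-1}$ times the product $(a_1 - \lambda 1)(a_2 - \lambda 1)$. Under the standing assumption that both factors are invertible, this displays $(a_1 + a_2) - \lambda 1$ as a nonzero scalar multiple of a product of invertible elements, hence itself invertible, so $\lambda \notin \sigma(a_1 + a_2)$.

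It remains to handle $\lambda = 0$, where the identity degenerates and a separate remark is needed. Here I observe that $0$ is automatically a member of $\sigma(a_1) \cup \sigma(a_2)$: if both $a_1$ and $a_2$ were invertible, then left-multiplying $a_1 a_2 = 0$ by $a_1^{-1}$ would force $a_2 = 0$, contradicting invertibility in a nontrivial algebra. Consequently \eqref{STI} holds at $\lambda = 0$ with nothing further to verify. I do not anticipate a genuine obstacle; the only subtlety is recognizing the factorization above, after which the conclusion for $\lambda \neq 0$ is immediate, and ensuring that the $\lambda = 0$ endpoint is covered by the preceding observation rather than by the (there-vanishing) scalar multiple.
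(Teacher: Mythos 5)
Your proof is correct and follows essentially the same route as the paper's: the factorization $(a_1-\lambda I)(a_2 - \lambda I) = -\lambda(a_1+a_2 -\lambda I)$ together with the observation that $a_1a_2=0$ forces $0\in\sigma(a_1)\cup\sigma(a_2)$, so the case $\lambda=0$ never arises under the contrapositive hypothesis. Your explicit justification of that last observation (left-multiplying by $a_1^{-1}$) is a small elaboration the paper leaves implicit, but the argument is identical in substance.
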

 \begin{proof} Suppose that $\lambda \notin \sigma(a_1) \cup \sigma(a_2)$, so that
both $a_1-\lambda I$ and $a_2-\lambda I$ are invertible.   Note that $0$ must belong to $\sigma(a_1) \cup \sigma(a_2)$ because $a_1a_2 = 0$; thus, $\lambda$ is nonzero.   Because $a_1a_2 = 0$, we have
$$
(a_1-\lambda I)(a_2 - \lambda I) = -\lambda(a_1+a_2 -\lambda I).
$$ 
  Thus, $a_1 + a_2 -\lambda I$ is invertible, being a product of invertible elements. 
We conclude that if $\lambda$ belongs to the spectrum of $a_1+a_2$, then $\lambda\in \sigma(a_1) \cup \sigma(a_2)$; that is, inclusion (\ref{STI}) holds.  
\end{proof}

The preceding lemma yields, by induction, the corollary below (cf. \cite[Lemma 3.5]{SY}).
\begin{cor}\label{FLC} Let $n\ge 2$ be an integer and let $a_1, a_2, \ldots, a_n$ be elements of $\Al$  such that whenever $i,j$ belong to  $\{1, 2, \ldots, n\}$ and $i < j$, 
$$
a_ia_j = 0.
$$
Then 
$$
\sigma\left(\sum_{k=1}^n a_k\right)  \subseteq \left( \bigcup_{k=1}^n \sigma(a_k)\right).
$$
\end{cor}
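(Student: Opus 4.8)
The plan is to argue by induction on $n$, using Lemma~\ref{FL} as the only real input. The base case $n = 2$ is immediate: the hypothesis applied to $i = 1 < j = 2$ gives $a_1 a_2 = 0$, so Lemma~\ref{FL} yields $\sigma(a_1 + a_2) \subseteq \sigma(a_1) \cup \sigma(a_2)$, which is exactly the assertion.

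For the inductive step, I would assume the statement for any $n-1$ elements of $\Al$ satisfying the ordered annihilation hypothesis, and take $a_1, \ldots, a_n$ as given. The key move is to collapse the first $n-1$ summands into a single element and then invoke Lemma~\ref{FL}. Setting $b = \sum_{k=1}^{n-1} a_k$, I would observe that
$$
b\, a_n = \sum_{k=1}^{n-1} a_k a_n = 0,
$$
since each $k \in \{1, \ldots, n-1\}$ satisfies $k < n$ and hence $a_k a_n = 0$ by hypothesis. Applying Lemma~\ref{FL} to the pair $(b, a_n)$ then gives
$$
\sigma\!\left(\sum_{k=1}^{n} a_k\right) = \sigma(b + a_n) \subseteq \sigma(b) \cup \sigma(a_n).
$$
It remains only to bound $\sigma(b)$, and for this the sub-collection $a_1, \ldots, a_{n-1}$ plainly inherits the ordered annihilation hypothesis (restricted to indices in $\{1, \ldots, n-1\}$), so the induction hypothesis applies and gives $\sigma(b) \subseteq \bigcup_{k=1}^{n-1} \sigma(a_k)$. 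Combining the two inclusions produces $\sigma\!\left(\sum_{k=1}^{n} a_k\right) \subseteq \bigcup_{k=1}^{n} \sigma(a_k)$, closing the induction.

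I expect no genuine obstacle here beyond bookkeeping; the corollary is essentially Lemma~\ref{FL} iterated. The one point that warrants a moment's care is respecting the \emph{one-sided} nature of the relations $a_i a_j = 0$ for $i < j$: the grouped element $b$ must multiply $a_n$ on the \emph{right} so that the product $b\,a_n$ decomposes into terms $a_k a_n$ with $k < n$, matching the hypothesis and making Lemma~\ref{FL} genuinely applicable. For this reason I would isolate the largest index $a_n$ (rather than the smallest), which keeps the reduction-to-hypothesis step transparent; grouping $a_1$ against $\sum_{k=2}^{n} a_k$ would also work, since $a_1\sum_{k=2}^n a_k = \sum_{k=2}^n a_1 a_k = 0$, but peeling off the last index is the cleanest route to the inductive hypothesis.
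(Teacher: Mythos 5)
Your proof is correct and matches the paper's intended argument: the paper simply states that Corollary~\ref{FLC} follows from Lemma~\ref{FL} by induction, and the grouping you use (peeling off $a_n$ and collapsing $b = \sum_{k=1}^{n-1} a_k$, noting $b\,a_n = 0$) is precisely the decomposition the paper itself uses in the analogous proof of Corollary~\ref{CTA}. Your attention to the one-sided nature of the annihilation hypothesis is exactly the right point of care, and nothing is missing.
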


In an earlier version of this paper, all results in this section were stated for  sums $T_1 + T_2 + \cdots + T_n$ of Hilbert-space operators, where certain pairwise products from the set $\{T_1, T_2^*, T_2, T_2^*, \ldots, T_n, T_n^*\}$ are zero.   For example, the following proposition read as follows:  If $T$ and $S$ are operators on the Hilbert space $H$ such that $ST = TS = 0$ and $S^*T = TS^* = 0$, then $\sigma(S+T)\setminus \{0\} = \left(\vstrut \sigma(S) \cup \sigma(T)\right)\setminus\{0\}$.   Trieu Le (private communication) pointed out that the hypothesis $S^*T = TS^* = 0$ is unnecessary and provided the proof of the resulting improved proposition. This inspired the author to improve similarly all the results in the section.    

\begin{prop}\ {\rm [Trieu Le, private communication]} \label{TA}  Suppose that $a_1$ and $a_2$ are elements of $\Al$ such that $a_1a_2 = 0$ and $a_2a_1 = 0$.     Then
\begin{equation}\label{Eq1}
\sigma(a_1 + a_2)\setminus\{0\} = \left(\vstrut \sigma(a_1) \cup \sigma(a_2)\right)\setminus \{0\}.
\end{equation}
\end{prop}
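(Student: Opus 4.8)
The plan is to prove the two inclusions in (\ref{Eq1}) separately, with one of them coming essentially for free from what is already established. Since $a_1a_2 = 0$, Lemma~\ref{FL} applies verbatim and gives $\sigma(a_1+a_2) \subseteq \sigma(a_1) \cup \sigma(a_2)$; intersecting both sides with the complement of $\{0\}$ yields $\sigma(a_1+a_2)\setminus\{0\} \subseteq (\sigma(a_1) \cup \sigma(a_2))\setminus\{0\}$. So the entire content of the proposition is the reverse inclusion, and I would phrase it contrapositively: I fix a nonzero scalar $\lambda$ with $\lambda \notin \sigma(a_1+a_2)$, i.e.\ with $a_1 + a_2 - \lambda I$ invertible, and aim to show that then both $a_1 - \lambda I$ and $a_2 - \lambda I$ are invertible.

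The key step is a single algebraic identity that uses both annihilation hypotheses. Using only $a_1a_2 = 0$, I would expand $(a_1 - \lambda I)(a_2 - \lambda I) = a_1a_2 - \lambda a_1 - \lambda a_2 + \lambda^2 I = -\lambda(a_1 + a_2 - \lambda I)$, and, using only $a_2a_1 = 0$, the same expansion taken in the opposite order gives $(a_2 - \lambda I)(a_1 - \lambda I) = -\lambda(a_1 + a_2 - \lambda I)$. Thus both products equal $-\lambda(a_1 + a_2 - \lambda I)$, which is invertible precisely because $\lambda \neq 0$ and $a_1 + a_2 - \lambda I$ is invertible by assumption; I denote its inverse by $w$.

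From here the conclusion is immediate. The relation $(a_1 - \lambda I)\bigl[(a_2 - \lambda I)w\bigr] = I$ exhibits a right inverse for $a_1 - \lambda I$, while $\bigl[w(a_2 - \lambda I)\bigr](a_1 - \lambda I) = I$ exhibits a left inverse; since an element of a unital algebra possessing both a left and a right inverse is invertible, $a_1 - \lambda I$ is invertible and $\lambda \notin \sigma(a_1)$. Interchanging the roles of $a_1$ and $a_2$ gives $\lambda \notin \sigma(a_2)$ by the identical argument. Hence $\lambda \notin \sigma(a_1) \cup \sigma(a_2)$, which is the desired reverse inclusion, and combining with the first paragraph proves (\ref{Eq1}).

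I do not anticipate a genuine obstacle. The only point requiring care is that $\Al$ is merely an algebra over a field with no topology, so no resolvent or Neumann-series reasoning is available and every inverse must be produced by an explicit algebraic formula---which the factorization above does. The hard part, such as it is, is simply recognizing the right factorization; once found, the argument is purely formal. Two small things I would state explicitly are why a two-sided inverse follows from having separate one-sided inverses, and why the hypothesis $\lambda \neq 0$ is essential, since it is exactly what makes the scalar multiple $-\lambda(a_1 + a_2 - \lambda I)$ a unit.
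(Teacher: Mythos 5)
Your proposal is correct and follows essentially the same route as the paper's proof: both rest on the factorization $(a_1-\lambda I)(a_2-\lambda I) = -\lambda(a_1+a_2-\lambda I)$ together with its mirror image $(a_2-\lambda I)(a_1-\lambda I) = -\lambda(a_1+a_2-\lambda I)$ coming from the second annihilation hypothesis, concluding that each factor is left and right invertible, hence invertible. Your writeup merely makes the one-sided inverses explicit, which the paper leaves implicit.
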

\begin{proof}    Because $a_1a_2 = 0$, just as Lemma~\ref{FL}, we have
\begin{equation}\label{ProductRep}
(a_1-\lambda I)(a_2 - \lambda I) = -\lambda(a_1 + a_2 -\lambda I).
\end{equation} 
Suppose that $\lambda \ne 0$ and $\lambda\notin \sigma(a_1 + a_2)$. Then (\ref{ProductRep}) shows that $a_1-\lambda I$ is right invertible while $(a_2-\lambda I)$ is left invertible.  However, since $a_2a_1 = 0$, (\ref{ProductRep}) holds with $a_1$ and $a_2$ interchanged.  Thus each factor  $a_1-\lambda I$ and $a_2-\lambda I$ is invertible.  Hence, we have 
$$
  \left(\vstrut \sigma(a_1) \cup \sigma(a_2)\right)\setminus \{0\} \subseteq \sigma(a_1 + a_2)\setminus\{0\}.
$$
The reverse inclusion holds by Lemma~\ref{FL}.  
\end{proof}
    
\begin{cor}\label{CTA}  Let $n\ge 2$ be an integer and let $a_1, a_2, \ldots, a_n$ be elements of $\Al$ such that whenever $i,j$ are distinct elements in $\{1, 2, \ldots, n\}$,
$$
a_ia_j = a_ja_i = 0 
$$
Then 
$$
\sigma\left(\sum_{k=1}^n a_k\right) \setminus\{0\} = \left( \bigcup_{k=1}^n \sigma(a_k)\right) \setminus \{0\}.
$$
\end{cor}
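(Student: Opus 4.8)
The plan is to prove the result by induction on $n$, using Proposition~\ref{TA} both as the base case and as the engine of the inductive step. The base case $n=2$ is precisely the statement of Proposition~\ref{TA}, so nothing needs to be done there. For the inductive step I would assume the result for $n-1$ elements (with $n\ge 3$, so that $n-1\ge 2$) and take $a_1,\ldots,a_n$ satisfying the hypothesis that $a_ia_j = a_ja_i = 0$ for all distinct $i,j$.

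The key idea is to split off the last summand. Setting $b = a_1 + \cdots + a_{n-1}$, so that $\sum_{k=1}^n a_k = b + a_n$, the crucial observation is that $b$ and $a_n$ satisfy the two-element annihilation hypothesis of Proposition~\ref{TA}. Indeed, expanding $b\,a_n = \sum_{k=1}^{n-1} a_k a_n$ and $a_n\,b = \sum_{k=1}^{n-1} a_n a_k$, every term vanishes because $a_k a_n = a_n a_k = 0$ for $k \le n-1$. Hence $b\,a_n = a_n\,b = 0$, and Proposition~\ref{TA} gives $\sigma(b + a_n)\setminus\{0\} = \left(\sigma(b)\cup\sigma(a_n)\right)\setminus\{0\}$.

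It then remains to identify $\sigma(b)\setminus\{0\}$. Since $a_1,\ldots,a_{n-1}$ form a sub-collection still satisfying the pairwise annihilation hypothesis, the induction hypothesis applies and yields $\sigma(b)\setminus\{0\} = \left(\bigcup_{k=1}^{n-1}\sigma(a_k)\right)\setminus\{0\}$. Substituting this into the previous identity, and using that deleting $\{0\}$ distributes over unions, one obtains $\sigma(b+a_n)\setminus\{0\} = \left(\bigcup_{k=1}^{n}\sigma(a_k)\right)\setminus\{0\}$, which closes the induction.

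There is no genuine obstacle here; the argument is a routine induction whose only point requiring verification is that the partial sum $b$ annihilates, and is annihilated by, $a_n$, which is immediate from the hypotheses. The one piece of bookkeeping to handle carefully is the removal of the point $0$: one must invoke the identity $\left(\sigma(b)\cup\sigma(a_n)\right)\setminus\{0\} = \left(\sigma(b)\setminus\{0\}\right)\cup\left(\sigma(a_n)\setminus\{0\}\right)$ so that the induction hypothesis, stated for $\sigma(b)\setminus\{0\}$, can be substituted cleanly.
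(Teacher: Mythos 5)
Your proof is correct and follows essentially the same route as the paper: the paper's proof likewise sets $\alpha_1 = \sum_{k=1}^{n-1} a_k$ and $\alpha_2 = a_n$, observes that this pair satisfies the hypotheses of Proposition~\ref{TA}, and concludes by induction. Your write-up merely makes explicit the verification that $b\,a_n = a_n\,b = 0$ and the bookkeeping with the removal of $\{0\}$, both of which the paper leaves to the reader.
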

\begin{proof} Let $\alpha_1 = \sum_{k=1}^{n-1} a_k$ and $\alpha_2 = a_n$ and observe that $\alpha_1$ and $\alpha_2$ satisfy the hypotheses of Proposition~\ref{TA}.  Thus, this corollary holds holds by induction.
\end{proof}

The preceding corollary improves Lemma 3.4 of \cite{SY}, whose statement is equivalent to the following:   Suppose that  $a_1, a_2, \ldots, a_n$ are normal elements of a $C^*$ algebra such that $a_ia_j = a_j a_i = 0$ whenever $i,j\in \{1, \ldots, n\}$ are distinct; then
$$
 \sigma\left(\sum_{k=1}^n a_k\right) \setminus\{0\} = \left(\cup_{k=1}^n \sigma(a_k)\right)\setminus\{0\}.
$$

\begin{lemma}\label{LIP} Suppose that $a_1$ and $a_2$ are elements of $\Al$ such that  $a_1a_2 = 0$ and $a_2^2 = 0$.   Then 
$$
\sigma(a_1 + a_2) \setminus  \{0\} = \sigma(a_1) \setminus \{0\}.
$$
\end{lemma}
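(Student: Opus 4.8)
The plan is to prove the two inclusions of the asserted identity separately, exploiting the two hypotheses in complementary ways. Note at the outset that because we are \emph{not} given $a_2 a_1 = 0$, we cannot simply invoke Proposition~\ref{TA}, so the argument must respect the asymmetry between $a_1$ and $a_2$.

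First I would dispose of the inclusion $\sigma(a_1 + a_2)\setminus\{0\} \subseteq \sigma(a_1)\setminus\{0\}$. The key observation is that $a_2^2 = 0$ makes $a_2$ nilpotent, so that $\sigma(a_2) \subseteq \{0\}$: indeed, for $\lambda \ne 0$ one checks directly that $-\frac{1}{\lambda}I - \frac{1}{\lambda^2}a_2$ is a two-sided inverse of $a_2 - \lambda I$. Since $a_1 a_2 = 0$, Lemma~\ref{FL} applies and gives $\sigma(a_1 + a_2) \subseteq \sigma(a_1) \cup \sigma(a_2) \subseteq \sigma(a_1) \cup \{0\}$; removing $\{0\}$ yields the desired inclusion.

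For the reverse inclusion $\sigma(a_1)\setminus\{0\} \subseteq \sigma(a_1 + a_2)\setminus\{0\}$, I would argue by contraposition: fix $\lambda \ne 0$ with $a_1 + a_2 - \lambda I$ invertible and show $a_1 - \lambda I$ is invertible. As in Lemma~\ref{FL}, the relation $a_1 a_2 = 0$ yields the product representation
\begin{equation*}
(a_1 - \lambda I)(a_2 - \lambda I) = -\lambda(a_1 + a_2 - \lambda I).
\end{equation*}
The right-hand side is invertible (a nonzero scalar times an invertible element), and by the nilpotency computation above the trailing factor $a_2 - \lambda I$ is also invertible. Multiplying on the right by $(a_2 - \lambda I)^{-1}$ then exhibits
\begin{equation*}
a_1 - \lambda I = -\lambda(a_1 + a_2 - \lambda I)(a_2 - \lambda I)^{-1}
\end{equation*}
as a product of invertible elements, whence $a_1 - \lambda I$ is invertible and $\lambda \notin \sigma(a_1)$.

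The two inclusions together give the claim. There is no serious obstacle here beyond keeping track of the asymmetry: the hypothesis $a_2^2 = 0$ (in place of $a_2 a_1 = 0$) is precisely what is needed both to collapse $\sigma(a_2)$ to a subset of $\{0\}$ in the first inclusion and to invert the trailing factor $a_2 - \lambda I$ in the product representation for the second. The one point deserving a moment's care is confirming that $a_2 - \lambda I$ is genuinely two-sided invertible for $\lambda \ne 0$, which the explicit inverse settles at once.
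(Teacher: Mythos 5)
Your proposal is correct and follows essentially the same route as the paper: both rest on the identity $(a_1 - \lambda I)(a_2 - \lambda I) = -\lambda(a_1 + a_2 - \lambda I)$ together with the invertibility of $a_2 - \lambda I$ for $\lambda \ne 0$ (forced by $a_2^2 = 0$), from which the two-way equivalence of invertibility follows. The only cosmetic difference is that you route one inclusion through Lemma~\ref{FL} and supply the explicit inverse $-\frac{1}{\lambda}I - \frac{1}{\lambda^2}a_2$, whereas the paper derives the biconditional directly from the same identity.
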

\begin{proof}
Since $a_2^2 = 0$,  $\sigma(a_2) = \{0\}$.    Because $a_1a_2 =0$, we have
\begin{equation}\label{BAE}
(a_1 - \lambda I)(a_2 - \lambda I) = -\lambda(a_1 + a_2 - \lambda I).
\end{equation}
Assume $\lambda \ne 0$.  Since $\sigma(a_2) = \{0\}$, we see $(a_2-\lambda I)$  is  invertible, and it follows from (\ref{BAE}) that  $\lambda \in \sigma(a_1)$ if and only if $\lambda\in \sigma(a_1 + a_2)$,  which yields the lemma. \end{proof}

\begin{prop}\label{n2c} Suppose that $a_1$ and $a_2$ are elements of $\Al$ such that $a_1^2 = 0$ and $a_2^2 = 0$.    Then
\begin{equation}\label{INTEQ}
\sigma(a_1+ a_2)\setminus \{0\} =  \{\lambda:  \lambda^2 \in \sigma(a_1a_2)\}\setminus\{0\} =  \{\lambda:  \lambda^2 \in \sigma(a_2a_1)\}\setminus\{0\}.  
\end{equation}
\end{prop}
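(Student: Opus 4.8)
The plan is to reduce everything to a single statement about $s:=a_1+a_2$ and its square, and then to overcome a symmetry obstruction with an auxiliary anticommuting element. Write $P:=a_1a_2$ and $Q:=a_2a_1$. Since $\sigma(xy)\setminus\{0\}=\sigma(yx)\setminus\{0\}$ for any $x,y$ in a unital algebra (via the resolvent identity $(\lambda I-yx)^{-1}=\tfrac1\lambda(I+y(\lambda I-xy)^{-1}x)$ for $\lambda\neq0$), we have $\sigma(P)\setminus\{0\}=\sigma(Q)\setminus\{0\}$, which gives at once the second equality in the statement; so it suffices to treat $\sigma(a_1a_2)=\sigma(P)$. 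First I would compute, using $a_1^2=a_2^2=0$, that $s^2=P+Q$ and that $PQ=a_1(a_2^2)a_1=0=a_2(a_1^2)a_2=QP$. Proposition~\ref{TA} applied to $P,Q$ then gives $\sigma(s^2)\setminus\{0\}=(\sigma(P)\cup\sigma(Q))\setminus\{0\}=\sigma(a_1a_2)\setminus\{0\}$. Thus the whole proposition reduces to the claim that, for $\lambda\neq0$,
\[
\lambda\in\sigma(s)\iff\lambda^2\in\sigma(s^2).
\]

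Next I would record the elementary fact that commuting elements whose product is invertible are each invertible. Applied to the factorization $s^2-\lambda^2I=(s-\lambda I)(s+\lambda I)$, whose factors commute, this yields the easy direction: if $s^2-\lambda^2I$ is invertible then both $s\pm\lambda I$ are invertible, so $\lambda\in\sigma(s)$ forces $\lambda^2\in\sigma(s^2)$, i.e. $\sigma(s)\setminus\{0\}\subseteq\{\lambda:\lambda^2\in\sigma(s^2)\}\setminus\{0\}$. The same fact shows that if $\lambda^2\in\sigma(s^2)$ then at least one of $s-\lambda I$, $s+\lambda I$ is non-invertible. Here the genuine difficulty surfaces: the target set $\{\lambda:\lambda^2\in\sigma(s^2)\}$ is symmetric under $\lambda\mapsto-\lambda$, so to conclude I must know that $\sigma(s)\setminus\{0\}$ is itself symmetric about $0$, that is, that $s-\lambda I$ is invertible exactly when $s+\lambda I$ is. Proving this symmetry is the main obstacle, since in a general algebra the invertibility of $s-\lambda I$ says nothing a priori about $s+\lambda I$.

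The hard part will be resolved by introducing $t:=a_1-a_2$, itself a sum of two square-zero elements. A short computation gives the two crucial identities
\[
st=Q-P=-ts\qquad\text{and}\qquad t^2=-(P+Q)=-s^2,
\]
so $s$ and $t$ anticommute and $t^2=-s^2$. From $st=-ts$ one gets the intertwiners $(s-\lambda I)t=-t(s+\lambda I)$ and $t(s-\lambda I)=-(s+\lambda I)t$, together with their $\lambda\mapsto-\lambda$ analogues. Assuming $w:=(s-\lambda I)^{-1}$ exists and $\lambda\neq0$, I would then verify directly that
\[
v:=\frac{1}{\lambda^2}\bigl(twt-(s-\lambda I)\bigr)
\]
is a two-sided inverse of $s+\lambda I$: the intertwiners give $(s+\lambda I)tw=-t$, hence $(s+\lambda I)twt=-t^2=s^2$, and since $s^2=(s+\lambda I)(s-\lambda I)+\lambda^2I$ this rearranges to $(s+\lambda I)v=I$, the left inverse being obtained symmetrically. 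This computation is purely algebraic and imposes no condition on the characteristic. Interchanging $\lambda$ with $-\lambda$ gives the converse, so $\sigma(s)\setminus\{0\}$ is symmetric. With symmetry established, the leftover case above—where only $s+\lambda I$ fails to be invertible—also forces $s-\lambda I$ non-invertible, so $\{\lambda:\lambda^2\in\sigma(s^2)\}\setminus\{0\}\subseteq\sigma(s)\setminus\{0\}$. Combining this with the easy inclusion and with $\sigma(s^2)\setminus\{0\}=\sigma(a_1a_2)\setminus\{0\}$ from Proposition~\ref{TA} yields $\sigma(a_1+a_2)\setminus\{0\}=\{\lambda:\lambda^2\in\sigma(a_1a_2)\}\setminus\{0\}$, completing the proof.
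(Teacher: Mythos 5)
Your proof is correct, but it takes a genuinely different route from the paper's. The paper's argument is essentially two lines, resting on the single factorization
\[
(a_1+\lambda I)\,(a_1+a_2-\lambda I)\,(a_2+\lambda I)\;=\;\lambda\,(a_1a_2-\lambda^2 I),
\]
which holds because $a_1^2=a_2^2=0$ kills the extraneous terms; since $a_i^2=0$ also makes each outer factor invertible (explicitly $(a_i+\lambda I)^{-1}=\lambda^{-2}(\lambda I-a_i)$), one reads off at once that for $\lambda\neq 0$ the element $a_1+a_2-\lambda I$ is invertible if and only if $a_1a_2-\lambda^2 I$ is, and the second equality in (\ref{INTEQ}) follows by interchanging $a_1$ and $a_2$ or by Jacobson's Lemma. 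You instead reduce the statement to comparing $\sigma(s)$ with $\sigma(s^2)$, identify $\sigma(s^2)$ through $s^2=P+Q$ with $PQ=QP=0$ via Proposition~\ref{TA} and Jacobson's Lemma, and then dispose of the real obstruction---that $\{\lambda:\lambda^2\in\sigma(s^2)\}$ is symmetric under $\lambda\mapsto-\lambda$ while $\sigma(s)$ has no a priori reason to be---by means of the anticommuting partner $t=a_1-a_2$ with $t^2=-s^2$ and the explicit inverse $v=\lambda^{-2}\bigl(twt-(s-\lambda I)\bigr)$; I checked the two-sided inverse computation and it is sound, and the argument degenerates harmlessly in characteristic $2$, where $t=s$ and the symmetry claim is vacuous. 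What the paper's identity buys is brevity, independence from Proposition~\ref{TA}, and, importantly, the template for the inductive proof of Proposition~\ref{RSM}: the same device of multiplying by an invertible $(a_0+\lambda I)$ is iterated there, so the three-factor identity is not just a trick but the engine of the whole section. What your argument buys is structural insight: it isolates the fact---only implicit in the paper's formula---that the spectrum of a sum of two square-zero elements is symmetric about the origin, and it exhibits the $\mathbb{Z}_2$-graded mechanism ($s$ and $t$ anticommute, $t^2=-s^2$) responsible for that symmetry, at the cost of invoking two earlier results where the paper needs none.
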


\begin{proof}  A computation shows that
\begin{equation}\label{SMT}
(a_1 +\lambda I)(a_1 + a_2 - \lambda I)(a_2 + \lambda I) = \lambda(a_1a_2 - \lambda^2 I).
\end{equation}
Assume that $\lambda \ne 0$ and note that because $a_1^2 = 0 = a_2^2$,  both factors $(a_1 + \lambda I)$ and $(a_2 +\lambda I)$ are invertible. Thus, equation (\ref{SMT}) tells us that $(a_1 + a_2 - \lambda I)$ is invertible if and only if    $(a_1a_2 - \lambda^2 I)$ is invertible.  This observation establishes the first equality of (\ref{INTEQ}). To obtain the second,  modify (\ref{SMT}) by  interchanging $a_1$ and $a_2$ or apply Jacobson's Lemma, which states that for any $a_1, a_2\in \Al$, the nonzero elements of $\sigma(a_1a_2)$ and $\sigma(a_2a_1)$ must coincide.
\end{proof}

 Proposition~\ref{n2c} provides the base case of the inductive proof of the next proposition.
  
\begin{prop}\label{RSM}  Let $n\ge 2$ and let $a_0, a_1, \ldots, a_{n-1}$ be elements of $\Al$ satisfying 
\begin{equation}\label{ARRSM}
a_ja_k = 0 \   \text {for all}\  j, k  \in \{0, \ldots, n-1\} \  \text{ except possibly when} \ k = (j+1) \mod n 
\end{equation}
Then 
 \begin{equation}\label{GENINT}
\sigma\left(\sum_{j=0}^{n-1} a_j \right)\setminus \{0\} =  \{\lambda:  \lambda^n \in  \sigma\left( \Pi_{j=0}^{n-1}a_j \right)\}\setminus\{0\}.
\end{equation}
\end{prop}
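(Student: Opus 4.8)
The plan is to argue by induction on $n$, with Proposition~\ref{n2c} serving as the base case $n=2$: there the hypothesis (\ref{ARRSM}) forces $a_0^2 = a_1^2 = 0$ while leaving $a_0a_1$ and $a_1a_0$ unconstrained, which is exactly the situation treated by Proposition~\ref{n2c}. Throughout I write $S = \sum_{j=0}^{n-1} a_j$ and $\Pi = a_0 a_1 \cdots a_{n-1}$, and I note that taking $k=j$ in (\ref{ARRSM}) gives $a_j^2 = 0$ for every $j$, so each $a_j + \lambda I$ is invertible whenever $\lambda \neq 0$ (its inverse is $\tfrac1\lambda I - \tfrac1{\lambda^2} a_j$). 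Since $0$ plays no role in (\ref{GENINT}), I fix $\lambda \neq 0$ and aim to prove directly that $S - \lambda I$ is invertible if and only if $\Pi - \lambda^n I$ is invertible. Working at the level of invertibility keeps the argument valid over an arbitrary field and avoids any appeal to a spectral mapping theorem.

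The reduction step collapses the $n$-cycle to an $(n-1)$-cycle. Right-multiplying by the invertible element $a_{n-1}+\lambda I$ and using (\ref{ARRSM}) to kill all but one term of $S a_{n-1}$, I expect the identity
\[
(S-\lambda I)(a_{n-1}+\lambda I) = \lambda\left(S'' - \lambda I\right),
\qquad S'' := a_0 + \cdots + a_{n-3} + \tilde a_{n-2},
\]
where $\tilde a_{n-2} := a_{n-2} + \tfrac1\lambda\, a_{n-2}a_{n-1}$ absorbs the surviving product $a_{n-2}a_{n-1}$. The heart of the bookkeeping is to verify that the $n-1$ elements $a_0, \ldots, a_{n-3}, \tilde a_{n-2}$ again satisfy the cyclic annihilation pattern (\ref{ARRSM}) for an $(n-1)$-cycle: one checks $\tilde a_{n-2}^2 = 0$ (using $a_{n-2}^2 = 0$ and $a_{n-1}a_{n-2}=0$), that $a_{n-3}\tilde a_{n-2}$ and $\tilde a_{n-2}a_0$ are the only newly permitted nonzero products, and that every remaining product involving $\tilde a_{n-2}$ vanishes, each reducing via (\ref{ARRSM}) to a product of non-consecutive original factors. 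The induction hypothesis then applies to $S''$ and gives, for our fixed $\lambda \neq 0$, that $S'' - \lambda I$ is invertible if and only if $\Pi' - \lambda^{n-1} I$ is invertible, where $\Pi' := a_0 \cdots a_{n-3}\tilde a_{n-2}$ is the $(n-1)$-cycle product.

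The final step reconciles $\Pi'$ with $\Pi$, and this is where I expect the main obstacle to lie: $\Pi'$ depends on $\lambda$ and equals not $\Pi$ but $\Pi' = b + \tfrac1\lambda \Pi$, where $b := a_0 \cdots a_{n-2}$. Multiplying by $\lambda$, invertibility of $\Pi' - \lambda^{n-1}I$ is equivalent to that of $\lambda b + \Pi - \lambda^n I$, which carries the spurious summand $\lambda b$. The resolution is to observe that $b$ is nilpotent with $b^2 = 0$ (the junction $a_{n-2}a_0$ vanishes by (\ref{ARRSM})) and that $b\Pi = b^2 a_{n-1} = 0$, so that the invertible element $I - \lambda^{-(n-1)} b$, with inverse $I + \lambda^{-(n-1)} b$, satisfies
\[
\left(I - \lambda^{-(n-1)} b\right)\left(\Pi - \lambda^n I\right) = \lambda b + \Pi - \lambda^n I .
\]
Hence $\lambda b + \Pi - \lambda^n I$ is invertible if and only if $\Pi - \lambda^n I$ is, and stringing the equivalences together shows that $S - \lambda I$ is invertible if and only if $\Pi - \lambda^n I$ is invertible, completing the induction. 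The two points needing the most care are the verification that the modified family obeys (\ref{ARRSM}) and the precise identification of the nilpotent correction $\lambda b$; I would also check the case $n=3$ separately, where several index ranges collapse, to confirm no degeneracy.
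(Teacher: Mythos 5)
Your proof is correct and is essentially the paper's own argument in mirror image: the paper likewise inducts from Proposition~\ref{n2c}, collapses the $n$-cycle by multiplying $\sum_{j}a_j-\lambda I$ on the \emph{left} by the invertible element $a_0+\lambda I$ (absorbing the correction $a_0a_1$ into the first new summand, so the spectral parameter becomes $\lambda^2$), checks the collapsed family satisfies the cyclic annihilation pattern, and then strips the nilpotent remainder from the resulting product by citing Lemma~\ref{LIP}. Your right-multiplication by $a_{n-1}+\lambda I$, the $\lambda$-dependent summand $\tilde a_{n-2}$, and the explicit factorization with $I-\lambda^{-(n-1)}b$ (which is precisely the proof of the mirror version of Lemma~\ref{LIP}, needed here because $b$ annihilates $\Pi$ on the left rather than the right) play the identical roles, so the two proofs coincide in substance.
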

Note that by Jacobson's Lemma, the set on the right of (\ref{GENINT}) may be replaced by  $\left\{\lambda:  \lambda^n \in  \sigma\left(\Pi_{j=k}^{k+n-1} a_{j \mod n}\right)\right\}\setminus \{0\}$ for any $k\in \{1, 2, \ldots, n-1\}$.

\begin{proof}[Proof of Proposition~\ref{RSM}] 
 Proposition~\ref{n2c} gives the result when $n = 2$.  Our argument is inductive:  fix $i\ge 3$ and assume the proposition is valid when $n = i-1$.    Assume $\lambda \ne 0$.

 Suppose that the annihilation relations (\ref{ARRSM}) hold for $n = i$.   We then have
\begin{align*} 
(a_0 + \lambda I)\left(\sum_{j=0}^{i-1} a_j - \lambda I\right) & = a_0a_1 + \lambda \sum_{j=1}^{i-1} a_j  - \lambda^2 I\\
& = \sum_{j=0}^{i-2} \alpha_j - \lambda^2I,
\end{align*}
where $\alpha_0 = a_0a_1 + \lambda a_1$ and for $j\in\{1, \ldots, i-2\}$, $\alpha_j = \lambda a_{j+1}$.    Because $(a_0 + \lambda I)$ is invertible ($a_0^2 = 0$), we see that 
\begin{equation}\label{1S}
\lambda \in \sigma\left(\sum_{j=0}^{i-1} a_j \right) \ \text{iff}\ \lambda^2 \in \sigma\left(\sum_{j=0}^{i-2} \alpha_j\right).
\end{equation}
 Now observe that
 $$
\alpha_j\alpha_k = 0 \   \text {for all}\  j, k  \in \{0, \ldots, i-2\} \  \text{ except possibly when} \ k = (j+1) \mod (i-1).
$$
Thus, by our induction hypothesis,  
\begin{equation}\label{2S}
\lambda^2 \in \sigma\left( \sum_{j=0}^{i-2} \alpha_j\right) \text{iff} \  \lambda^{2(i-1)} \in   \sigma\left(\alpha_0\cdots \alpha_{i-2}\right).
\end{equation}
However, by definition of $\alpha_j$, we have $ \alpha_0\cdots \alpha_{i-2} = \lambda^{i-2}a_0a_1\cdots a_{i-1} + \lambda^{i-1}a_1\cdots a_{i-1}$.  Because  $\lambda^{i-1}a_1\cdots a_{i-1}$ squares to $0$ and $ \left(\lambda^{i-2}a_0a_1\cdots a_{i-1}\right) \left(\lambda^{i-1}a_1\cdots a_{i-1}\right) = 0$, Lemma~\ref{LIP} and (\ref{2S}) combine to show that 
\begin{equation}\label{3S}
\lambda^{2(i-1)} \in   \sigma\left(\alpha_0\cdots \alpha_{i-2}\right) \ \text{iff} \ \lambda^{2(i-1)} \in \sigma\left(\lambda^{i-2}a_0a_1\cdots a_{i-1}\right).
\end{equation}
Finally, it's easy to see that
\begin{equation}\label{4S}
\lambda^{2(i-1)} \in \sigma\left(\lambda^{i-2}a_0a_1\cdots a_{i-1}\right) \ \text{iff} \ \lambda^i \in \sigma\left(a_0 a_1 \ldots a_{i-1}\right).
\end{equation}
Combining the conclusions of (\ref{1S}) through (\ref{4S}), we see
$$
\sigma\left(\sum_{j=0}^{i-1} a_j\right)\setminus \{0\} =  \{\lambda: \lambda^i \in \sigma\left(a_0 a_1 \ldots a_{i-1}\right)\}\setminus\{0\},
$$
which completes our inductive proof.
\end{proof}

\section{Main Results}

We apply the results of the preceding section with $\Al$ equaling the Calkin Algebra $B(H^2(\D))/B_0(H^2(\D))$ to characterize the essential spectrum of composition operators on $H^2(\D)$ whose symbols belong to $\ess(2)$.  The complete spectrum is easily derived once the essential spectrum is known.  

Let $\phi$ belong to $\ess(2)$, so that $E(\phi) = F(\phi)$ is finite. Suppose that  $E(\phi)$ is empty, then $C_\phi$ is compact \cite{CMa}, and thus by \cite[Theorem 3]{CSw} the spectrum of $C_\phi$ consists of $0$ together with the terms of the sequence $\left(\phi'(\omega)^k\right)_{k=0}^\infty $, where $\omega$ is the Denjoy-Wolff  point of $\phi$ (which is necessarily contained in $\D$).  Next assume that $E(\phi)$ is not empty:  
$$
E(\phi) = \{\zeta_1, \ldots, \zeta_n\}
$$
for some positive integer $n$.     By definition of $\ess(2)$,  $\phi$ is $C^2(\zeta_j)$ for $j= 1, \ldots, n$, and there are linear-fractional selfmaps of $\D$, $\psi_1, \ldots, \psi_n$, such that 
\begin{equation}\label{Decomp}
[C_\phi] = [C_{\psi_1}] + \cdots + [C_{\psi_n}],
\end{equation}
where $\psi_j$'s second-order data agrees with $\phi$'s at $\zeta_j$ for $j = 1, \ldots, n$.

\subsection{Partitioning the points of $E(\phi)$ based on their orbits under $\phi$.}\label{FSS1}   Let $j\in \{1, \ldots, n\}$, where, as above, $n$ is the number of elements in $E(\phi) = F(\phi)$.  Consider the iterate sequence $\zeta_j, \phi(\zeta_j), \phi^{[2]}(\zeta_j), \ldots$.  Either there is 
\begin{itemize}
\item[(i)] an  integer $k$,  $1\le k \le n$ such that $\phi^{[k]}(\zeta_j)\notin E(\phi)$, or
\item[(ii)]  $\phi^{[k]}(\zeta_j) \in E(\phi)$ for all $k\in \{0,  1,  2\ldots, n\}$.  
\end{itemize}
Suppose that (ii) holds; then  because $E(\phi)$ has $n$ elements, we see that there is a least positive integer $m \le n$  such that $\phi^{[m]}(\zeta_j) = \phi^{[i]}(\zeta_j)$, for some $i$ satisfying  $0 \le i \le m-1$.  If $ i = 0$, then the iterate sequence $(\phi^{[k]}(\zeta_j)_{k=0}^\infty$ is periodic with fundamental period $m$, consisting of repetitions of the cycle $\{\zeta_j, \phi(\zeta_j), \ldots, \phi^{[m-1]}(\zeta_j)\}$.  If $i > 0$, then $(\phi^{[k]}(\zeta_j))_{k=0}^\infty$ is eventually periodic with fundamental period $m-i$, and from $\phi^{[i]}(\zeta_j)$ onward consists of repetitions of the cycle $\{\phi^{[i]}(\zeta_j), \phi^{[i+1]}(\zeta_j), \ldots, \phi^{[m-1]}(\zeta_j)\}$.   

\begin{obser} \label{CYOB} Suppose that $P$ is a cycle of $\phi$ of length $\ell$ that lies in $E(\phi)$.  If $\xi \in P$, then $\phi^{[\ell]}(\xi) = \xi$. Moreover,  for any two points $\xi_1$ and $\xi_2$ of $P$, $(\phi^{[\ell]})'(\xi_1) = (\phi^{[\ell]})'(\xi_2)$.  Thus, if $\ell > 1$, then $(\phi^{[\ell]})'(\xi)>1$ at every point $\xi$ of $P$;  otherwise, $\phi^{[\ell]}$ would have more than one Denjoy-Wolff point.
\end{obser}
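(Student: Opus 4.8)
The plan is to treat the three assertions in turn, the first being essentially definitional and the last being the substantive point. Write $P=\{\xi_0,\xi_1,\dots,\xi_{\ell-1}\}$ with $\xi_i=\phi^{[i]}(\xi_0)$, so that $\phi$ permutes these $\ell$ distinct points cyclically and $\phi(\xi_{\ell-1})=\xi_0$; then $\phi^{[\ell]}(\xi_0)=\xi_0$, and for any $\xi_i=\phi^{[i]}(\xi_0)\in P$ we have $\phi^{[\ell]}(\xi_i)=\phi^{[i]}(\phi^{[\ell]}(\xi_0))=\xi_i$, giving the first claim. Since $P\subseteq E(\phi)=F(\phi)$, the map $\phi$ has a finite angular derivative at each $\xi_i$, and the Julia--Carath\'eodory theorem then forces $|\phi(\xi_i)|=1$; thus the entire orbit is a boundary-to-boundary situation to which the angular-derivative calculus applies.

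For the second claim I would invoke the chain rule for angular derivatives (valid precisely because each intermediate image $\phi(\xi_i)=\xi_{i+1}$ is a boundary point at which $\phi$ has a finite angular derivative; see \cite[\S 2.3]{CMB} or \cite[Chapter 4]{Sh2}). This yields
\[
(\phi^{[\ell]})'(\xi_i)=\prod_{j=0}^{\ell-1}\phi'\!\bigl(\phi^{[j]}(\xi_i)\bigr)=\prod_{k=0}^{\ell-1}\phi'(\xi_k),
\]
where the last product runs over \emph{all} of $P$ and is therefore independent of the starting index $i$. Hence $(\phi^{[\ell]})'(\xi_1)=(\phi^{[\ell]})'(\xi_2)$ for all $\xi_1,\xi_2\in P$; call this common value $c$.

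For the third claim, set $\Phi=\phi^{[\ell]}$ and note that each $\xi_i$ is a boundary fixed point of $\Phi$ at which $\Phi$ has finite angular derivative $c$; by the positivity of angular derivatives at boundary fixed points (a consequence of the Julia--Carath\'eodory theorem), $c>0$. Condition (i) in the definition of $\ess(2)$ gives $|\phi|<1$ a.e.\ on $\partial\D$, so $\phi$ is not an automorphism; in particular $\Phi$ cannot be the identity (else $\phi$ would be a finite-order, hence elliptic, automorphism), and since $\Phi$ fixes the $\ell\ge 2$ distinct boundary points of $P$ it is not a non-identity elliptic automorphism either, because such maps have no boundary fixed points. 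Thus the Denjoy--Wolff Theorem applies to $\Phi$, furnishing a unique Denjoy--Wolff point. I would then use the standard Julia--Wolff--Carath\'eodory fact that the \emph{only} boundary fixed point at which a selfmap's angular derivative can be $\le 1$ is its Denjoy--Wolff point. If $c\le 1$ held, then every point of $P$ would be a Denjoy--Wolff point of $\Phi$; as $\ell>1$ makes $P$ contain at least two distinct points, this contradicts the uniqueness of the Denjoy--Wolff point. Therefore $c>1$, as claimed.

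The main obstacle is the correct invocation of Julia--Wolff--Carath\'eodory theory at two places: the chain rule for angular derivatives in the second step, which is legitimate only because every intermediate image lands on $\partial\D$ with a finite angular derivative, and the characterization of the Denjoy--Wolff point as the unique boundary fixed point with angular derivative at most $1$ in the third. Both are standard but must be cited accurately; the conceptual heart is the final uniqueness argument, which converts the impossibility of two attracting boundary fixed points into the strict inequality $c>1$.
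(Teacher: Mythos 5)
Your proof is correct and follows essentially the same route as the paper's brief inline justification: the derivative equality comes from the chain rule for angular derivatives around the cycle (so the common value is the product $\prod_{k}\phi'(\xi_k)$ over all of $P$), and the strict inequality $(\phi^{[\ell]})'(\xi)>1$ for $\ell>1$ follows because a boundary fixed point with angular derivative at most $1$ must be the Denjoy--Wolff point, whose uniqueness would otherwise be violated. Your added check that $\phi^{[\ell]}$ is not an elliptic automorphism (via condition (i) of $\ess(2)$), so that the Denjoy--Wolff theorem applies, is a point the paper leaves implicit but is handled correctly.
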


Suppose that all points in $E(\phi)$ satisfy (i) and consider 
\begin{equation}\label{Itout}
[C_\phi^{n+1}] = [\left(C_{\psi_1}+ \cdots + C_{\psi_n}\right)^{n+1}],
\end{equation}
where $\psi_1, \psi_2, \ldots, \psi_n$ are the linear-fractional non-automorphic maps of (\ref{Decomp}).  
Expanding $\left(C_{\psi_1}+ \cdots + C_{\psi_n}\right)^{n+1}$, we obtain a sum of products of the form
\begin{equation}\label{PF}
C_{\psi_{j_1}}C_{\psi_{j_2}} \cdots C_{\psi_{j_{n+1}}}, 
\end{equation}
where for $i = 1, 2, \ldots, {n+1}$, ${j_i}\in\{1, \ldots, n\}$.  Note that the product (\ref{PF}) is a linear-fractional composition operator with non-automorphic symbol
\begin{equation}\label{PF2}
\nu:= \psi_{j_{n+1}}\circ \psi_{j_n}\circ \cdots \circ \psi_{j_1}.
\end{equation}
Recalling that the second-order data of $\psi_{j_k}$ agrees with that of $\phi$ at $\zeta_{j_k}$, we see the only point that  $\nu$  can possibly take to $\partial \D$ is $\zeta_{j_1}$.  Since $\psi_{j_1}(\zeta_{j_1}) = \phi(\zeta_{j_1})$, we have  $\nu(\zeta_{j_1}) = \left(\psi_{j_{n+1}}\circ \psi_{j_n}\circ \cdots\circ \psi_{j_2}\right) (\phi(\zeta_{j_1}))$.  There are two possibilities. Either  $\phi(\zeta_{j_1}) = \zeta_{j_2}\in E(\phi)$, in which case $\psi_{j_2}(\phi(\zeta_{j_1})) = \phi^{[2]}(\zeta_{j_1})$; or $\phi(\zeta_{j_1}) \ne \zeta_{j_2}$, in which case $\psi_{j_2}(\phi(\zeta_{j_1}))$ lies in $\D$  and $\nu$ must have $H^\infty(\D)$ norm strictly less than $1$.   Suppose that the former case holds: we have 
$\nu(\zeta_{j_1}) = \left(\psi_{j_{n+1}}\circ \psi_{j_n}\circ \cdots \circ \psi_{j_3}\right)(\phi^{[2]}(\zeta_{j_1}))$.     
Now we repeat the preceding argument based on whether or not $\phi^{[2]}(\zeta_{j_1})= \zeta_{j_3}$. If so, $\nu(\zeta_{j_1}) = \left(\phi_{j_{n+1}}\circ \psi_{j_n}\circ \cdots\circ \psi_{j_4}\right) (\phi^{[3]}(\zeta_{j_1}))$; if not,  $\|\nu\|_\infty < 1$.   Continue this process.  Because we are assuming that all points in $E(\phi)$ fall into category (i) above, there must be a least positive integer $k$, $k\le n$, such that  $\phi^{[k]}(\zeta_{j_1}) \notin E(\phi)$, and since $\psi_{j_{k+1}}(\phi^{[k]}(\zeta_{j_1}))\in \D$, we conclude that $\|\nu\|_\infty < 1$. Hence $C_\nu$ is compact.   Applying this analysis to every summand of the expansion of $\left(C_{\psi_1}+ \cdots + C_{\psi_n}\right)^{n+1}$, we see, via (\ref{Itout}) that 
$C_\phi^{n+1}$ is compact when every point of $E(\phi)$ satisfies the ``iterate-out'' condition (i) above. Being power compact, the essential spectrum of $C_\phi$ is the origin and its spectrum consists of the origin together with the eigenvalue sequence $\left(\phi'(\omega)^k\right)_{k=0}^\infty$, where $\omega\in \D$ is the Denjoy-Wolff point of $\phi$.   Thus we are interested in the situation where some points of $E(\phi)$ satisfy condition (ii) above, so that $E(\phi)$ contains at least one cycle under $\phi$.

Assuming that $E(\phi)$ does contain some cycles, we partition the points of $E(\phi) = \{\zeta_1, \ldots, \zeta_n\}$, according to (i) and (ii) above:
\begin{itemize}
\item[(a)]  ``iterate-out points":  those $\zeta\in E(\phi)$ for which (i) holds; 
\item[(b)]   periodic and  eventually periodic points: those $\zeta\in E(\phi)$ for which  (ii) holds.
\end{itemize}
We further partition the periodic and eventually periodic points of $E(\phi)$.  Periodic cycles are either disjoint or they coincide.  Let  $P_1, \ldots, P_{n_c}$ be the disjoint  cycles of $\phi$ lying in $E(\phi)$.   For each $j\in \{1, 2, \ldots, n_c\}$, associate with $P_j$ a possibly empty set $L(P_j)$ of ``lead-in points'' consisting of those points $\zeta$ satisfying (ii), which are not in $P_j$, but for which $\phi^{[k]}(\zeta) \in P_j$ for some $k\ge 1$.   
Letting $A$ be the (possibly empty) set of iterate-out points of $E(\phi)$, we may express $E(\phi)$ as the following disjoint union
\begin{equation}\label{POE}
E(\phi) = A\  \cup  {\bigcup_{j\in \{1, \ldots, n_c\}}} (L(P_j)\cup P_j).
\end{equation}
 
We illustrate the preceding partition with a concrete example.  

\begin{example} \label{PPEX} Consider 
\begin{equation}\label{Ex}
\phi_1(z) = \kappa(z)\gamma(z^2),
\end{equation}
where
$$
\kappa(z) = \frac{-z^3}{2-z^8}
$$
and
 $\gamma$ is the inner function
$$
\gamma(z) =  \frac{(1+i)+(3-i)z}{3+i +(1-i)z}.
$$
The right-halfplane incarnation  of $\gamma$, created via conjugation by $\rhp(z)= (1+z)/(1-z)$, is $\Gamma(w) = 2w + i$, and it follows that, e.g.,  $\gamma$ fixes $1$ and $-i$ and maps $-1$ to $i$.  It's clear that the only points in $\partial \D$ that $\kappa$ maps to the unit circle are the $8$ eighth-roots of unity.  Because $\gamma$ is an inner function,   these roots of unity are necessarily taken to the unit circle under $\phi_1$ (and are the only points of $\partial \D$ that are mapped to $\partial \D$ under $\phi_1$).   Because $\phi_1$ is analytic on the closed disk, it follows that $\phi_1$ has finite angular derivative at each of the eighth roots of unity.  Thus  $E(\phi_1)=  \{\zeta\in \partial\D: |\phi_1(\zeta)| = 1\} =\{\zeta_j: j = 1, \ldots, 8\}$, where $\zeta_j = e^{(j-1)\pi i/4}$ for $j = 1, 2, \ldots, 8$.
\begin{center}
\includegraphics[height=2.3in]{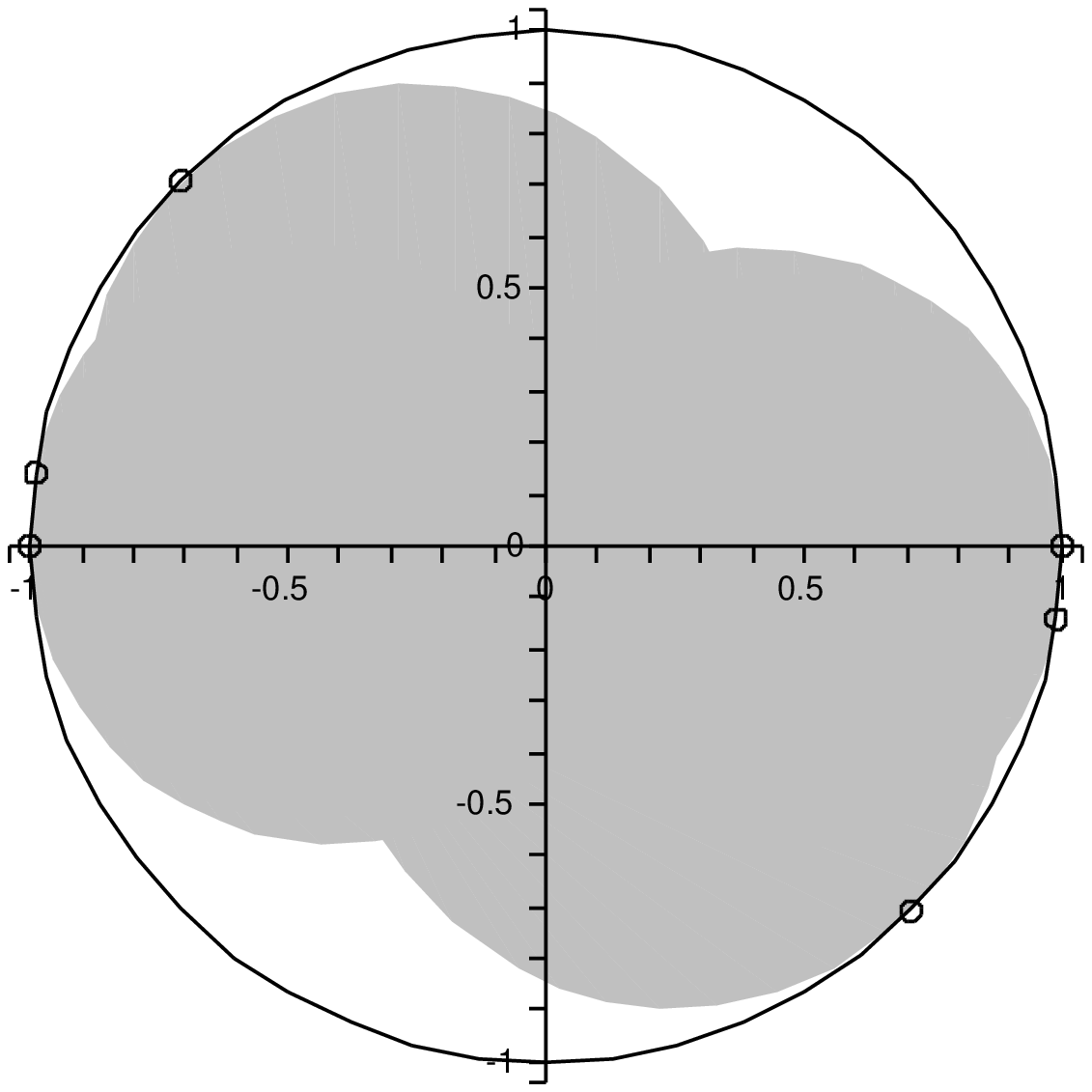}

\begin{minipage}{4in}  {\sc Figure}: $\phi_1(\D)$ is shaded gray, with images of points in $E(\phi_1)$ under $\phi_1$ circled.
\end{minipage}
\end{center}
We now classify the behavior of each of the points in $E(\phi_1)$ under iteration by $\phi_1$. 
\begin{itemize}
\item $\zeta_1 = 1$:   We have $\phi_1(1) = -1$ and $\phi_1(-1) = 1$.  Thus $1$ and $-1$ are periodic points of $\phi_1$ of period 2, with corresponding cycle $\{1, -1\}$.
\item $\zeta_2 = e^{i\pi/4}$:   $\phi_1(\zeta_2) = \sqrt{2}\left(\frac{7}{10} - \frac{1}{10} i\right)$, which is not an eighth root of unity. Thus, the boundary orbit of $\phi_1$ with initial point  $\zeta_2$ iterates out of $E(\phi_1)$.  
\item $\zeta_3= i$ : Now $\phi_1(i) = -1$, so that $\phi_1^{[2]}(i) = 1$ and $\phi_1^{[3]}(i) = -1$.  Thus $i$ is eventually periodic and leads into the cycle $\{1, -1\}$.  
\item $\zeta_4 = e^{i3\pi/4}$:  Note that  $\phi_1(\zeta_4) = \zeta_4$ so that $\zeta_4$ is fixed by $\phi_1$---it is a periodic point with period 1.  
\item $\zeta_5: = -1$: We've already observed that $-1$ is periodic of period $2$.
\item $\zeta_6 = e^{i5\pi/4}$:  It's easy to check that the boundary orbit of $\phi_1$ with initial point $\zeta_6$ iterates out of $E(\phi_1)$.  
\item $\zeta_7 = -i$:  We have $\phi_1(-i) = 1$, so that $-i$ is eventually periodic, leading into the cycle $\{1, -1\}$.  
\item $\zeta_8 = e^{i7\pi/4}$:  The point $\zeta_8$ is fixed by $\phi_1$.
\end{itemize}
Hence, in the notation of (\ref{POE}) above, we have for $\phi_1$ given by (\ref{Ex}),
\begin{align*}
A= \{e^{i\pi/4}, e^{i5\pi/4}\}, P_1= \{1, -1\}, L(P_1) = \{i, -i\},\\ P_2= \{e^{3\pi i/4}\}, L(P_2) = \{ \},  P_3=\{e^{7\pi i/4}\}, L(P_3) = \{ \}.
\end{align*}
\end{example}

We now return to the general situation where $E(\phi)$ has $n$ points $\{\zeta_1, \ldots, \zeta_n\}$ partitioned according to (\ref{POE}) and consider the corresponding partition of the summands of the decomposition of $[C_\phi]$ given by (\ref{Decomp}):  
\begin{equation}\label{ID}
[C_\phi] = \left[\vstrut \sum_{\{j: \zeta_j \in A\}} C_{\psi_j}\right] + \left[\sum_{k=1}^{n_c}  \left(\sum_{\{j: \zeta_j \in L(P_k)\cup P_k\}} C_{\psi_j}\right)\right].
\end{equation}
If $A$ is not empty, define 
\begin{equation}\label{TND}
T_0 =   \sum_{\{j: \zeta_j \in A\}} C_{\psi_j};
\end{equation}
 and for $k \in \{1, \ldots, n_c\}$, let 
\begin{equation}\label{TKD}
T_k = \sum_{\{j: \zeta_j \in L(P_k)\cup P_k\}} C_{\psi_j}.
\end{equation}   
 Thus (\ref{ID}) may be written (assuming $A\ne \emptyset$)
\begin{equation}\label{IDT}
[C_\phi] = [T_0] + \sum_{k=1}^{n_c} [T_k].
\end{equation}
Our goal is to characterize the essential spectrum of $C_\phi$ using the preceding decomposition.  Since $\phi$ is not an automorphism of $\D$, {\it $0$ necessarily belongs to the essential spectrum of $C_\phi$} (\cite[Theorem 1]{CTW}).  Thus, as we consider the potential contributions to the essential spectrum of the summands from (\ref{IDT}), a contribution of $\{0\}$ is meaningless.

\subsection{ Iterate-out points contribute nothing to the essential spectrum.}   Assume that $A\ne \emptyset$ and that $T_0$ and $T_k$ are defined as in (\ref{TND}) and (\ref{TKD}).
Let $k\in \{1, \ldots, n_c\}$ be arbitrary.  Let $j_1\in \{j: \zeta_j\in A\}$  and $j_2\in \{j: \zeta_j \in L(P_k) \cup P_k\}$  be arbitrary.   Observe that $C_{\psi_{j_1}} C_{\psi_{j_2}}$ is compact because $\psi_{j_2}\circ \psi_{j_1}$ has $H^\infty(\D)$ norm less than $1$. (The only point that the linear-fractional map $\psi_{j_1}$ takes to $\partial \D$ is $\zeta_{j_1}$ and $\psi_{j_1}(\zeta_{j_1}) = \phi(\zeta_{j_1}) \ne \zeta_{j_2}$; otherwise,  $\zeta_{j_1}$ would be eventually periodic under $\phi$, with all its iterates belonging to $E(\phi)$,  contradicting its membership in $A$.) It's also easy to see that $C_{\psi_{j_2}} C_{\psi_{j_1}}$ is compact. 

Let $S = T_0$, $T = \sum_{k=1}^{n_c} T_k$, and note we have just shown that $a_1:=[S]$ and $a_2:=[T]$ satisfy the annihilation hypotheses of Proposition~\ref{TA}.    Thus, we have
\begin{equation}\label{FFS}
\sigma_e(S + T) \setminus \{0\} =\left(\vstrut  \sigma_e(S) \cup \sigma_e(T)\right)\setminus \{0\}.
\end{equation}

We claim $\sigma_e(S) = \{0\}$.    Consider the operator $S^{n+1}: H^2(\D) \rightarrow H^2(\D)$.  It consists of a finite sum of products of $n+1$ composition operators, with each product having the form
\begin{equation}\label{NNPF}
C_{\psi_{i_1}}C_{\psi_{i_2}} \cdots C_{\psi_{i_{n+1}}}, 
\end{equation}
where for $k = 1, 2, \ldots, {n+1}$, $\zeta_{i_k}\in A$ and the integers $i_k, i_m$ (belonging to $\{j: \zeta_j\in A\}$) are not necessarily distinct.  Note that the product (\ref{NNPF}) is a composition operator with symbol
\begin{equation}\label{NNPF2}
\nu_0:= \psi_{i_{n+1}}\circ \psi_{i_n}\circ \cdots \circ \psi_{i_1}.
\end{equation}
   The only point that the linear-fractional selfmap $\nu_0$ of $\D$ can possibly take to $\partial \D$ is $\zeta_{i_1}$.  However, because $\zeta_{i_1}$ is an iterate-out point, the argument used above to show $\nu$ defined by (\ref{PF2}) satisfies $\|\nu\|_\infty<\infty$ shows $\|\nu_0\|_\infty < 1$.      Applying this analysis to every summand of $S^{n+1}$, we see that $S^{n+1}$ is compact and 
$$
\sigma_e(S) = \{0\}
$$
as claimed.

 Using (\ref{IDT}) and  (\ref{FFS}), we see if $A$ is not empty, then
\begin{equation}\label{OTK}
\sigma_e(C_\phi) \setminus \{0\} = \sigma_e(S + T)  \setminus \{0\} = \left(\vstrut \sigma_e\left(\sum_{k=1}^{n_c} T_k\right)\right)\setminus \{0\}.
\end{equation}
Note that  $\sigma_e(C_\phi)= \sigma_e\left(\sum_{k=1}^{n_c} T_k\right)$ obviously holds if $A$ is empty. 

\subsection{ Contributions to the essential spectrum from $T_1, \ldots, T_k$ are independent.} Now let  $m$ and $q$ be distinct indices in $\{1, \ldots, n_c\}$; then $L(P_m)\cup P_m$ and $L(P_q)\cup P_q$ are disjoint.  If  $j\in \{1, \ldots, n\}$ is such that $\zeta_j \in L(P_m)\cup P_m$ then $\psi_j(\zeta_j)\in L(P_m)\cup P_m$ and the same is true with $q$ replacing $m$.  This means that if $j_1,j_2\in \{1, 2,\ldots, n\}$ are such that $\zeta_{j_1}\in  L(P_m)\cup P_m$ and $\zeta_{j_2}\in  L(P_q)\cup P_q$, then $\psi_{j_1}\circ \psi_{j_2}$ and $\psi_{j_2}\circ \psi_{j_1}$ have $H^\infty(\D)$ norm less than $1$.  It follows that $[T_m][T_q] = 0$ and $[T_q][T_m] = 0$.    Thus, $[T_1], [T_2], \ldots, [T_{n_c}]$ satisfy the annihilation hypotheses of Corollary~\ref{CTA}, and hence
\begin{equation}\label{OTK2}
\sigma_e\left(\sum_{k=1}^{n_c} T_k\right)\setminus\{0\} = \bigcup_{k=1}^{n_c} \sigma_e(T_k)\setminus\{0\}.
\end{equation}
We now turn our attention to understanding $\sigma_e(T_k)$, focusing first on the ``cycle-based portion'' of the sum $T_k$, where we continue to assume that $T_k$ is defined by (\ref{TKD}).    

\subsection{Characterization of the essential spectrum of a cycle-based sum $\sum_{\{j:\zeta_j\in P_m\}} C_{\psi_j}$} \label{FSSC} Let $m\in \{1, \ldots, n_c\}$.  Independent of whether  the lead-in set $L(P_m)$ of $P_m$ is empty, we  characterize the essential spectrum of $\sum_{\{j:\zeta_j\in P_m\}} C_{\psi_j}$,  which is a sum of composition operators whose symbols correspond to the cycle $P_m$ of $\phi$.
Let $\ell$ be the length of the cycle $P_m$.  {\em Assume that $\ell > 1$.  }
 Let $\{j_0, \ldots, j_{\ell-1}\} = \{j: \zeta_j\in P_m\}$ and be such that $\phi(\zeta_{j_i}) = \zeta_{j_{(i+1)\mod\ell}}$ for $i = 0, 1, \ldots, \ell-1$.  Let $i,k\in \{0, \ldots, \ell-1\}$ be arbitrary (not necessarily distinct).  Observe that $\psi_{j_k} \circ \psi_{j_i}$ has $H^\infty(\D)$ norm less than $1$ unless $k =(i+1)\mod\ell$ (because the only point that $\psi_{j_i}$ maps to $\partial \D$ is $\zeta_{j_i}$ and $\psi_{j_i}(\zeta_{j_i}) = \zeta_{j_{(i+1)\mod\ell}}$).  Hence,
 \begin{quotation}
 $[C_{\psi_{j_i}}C_{\psi_{j_k}}] = 0$  for all   $i,k\in \{0, \dots, \ell-1\}$  except when  $k = (i+1)\mod\ell$.
 \end{quotation} 
Thus, the summands  of $\sum_{i=0}^{\ell-1} [C_{\psi_{j_i}}]$ satisfy the annihilation relations of Proposition~\ref{RSM}, and we may conclude that 
\begin{equation}\label{CPC}
\sigma_e(\sum_{\{j:\zeta_j\in P_m\}} C_{\psi_j}) \setminus\{0\} = \left\{\lambda: \lambda^\ell \in  \sigma_e\left(C_{\psi_{j_0}}C_{\psi_{j_1}} \cdots C_{\psi_{j_{\ell -1}}}\right)\right\}\setminus\{0\}.
\end{equation}

We now identify $ \sigma_e\left(C_{\psi_{j_0}}C_{\psi_{j_1}} \cdots C_{\psi_{j_{\ell -1}}}\right)$.  Observe that
$$
C_{\psi_{j_0}}C_{\psi_{j_1}} \cdots C_{\psi_{j_{\ell -1}}} = C_{\psi_{j_{\ell-1}}\circ \cdots \circ \psi_{j_1}\circ\psi_{j_0}}
$$
and that the linear-fractional, non-automorphic symbol $\gamma:=\psi_{j_{\ell-1}}\circ \cdots \circ \psi_{j_1}\circ\psi_{j_0}$ fixes the point $\zeta_{j_0}\in \partial \D$.  Moreover, 
$$
\gamma'(\zeta_{j_0}) = \Pi_{k=0}^{\ell - 1} \psi_{j_k}'(\zeta_{j_k}).
$$
Note that because of boundary data agreement, it's easy to see that
 \begin{equation}\label{GTP}
 \gamma'(\zeta_{j_0}) = (\phi^{[\ell]})'(\zeta_{j_0})
 \end{equation}
  and $\zeta_{j_0}$ is fixed for $\phi^{[\ell]}$.   Recall that we are assuming that $\ell > 1$. By Observation~\ref{CYOB},   $ \gamma'(\zeta_{j_0}) = (\phi^{[\ell]})'(\zeta_{j_0})$ exceeds $1$. Applying Corollary~\ref{DWDDES}, we have
  $$
\sigma_e(C_{\gamma}) = \left\{z: |z| \le  \frac{1}{\sqrt{\gamma'(\zeta_{j_0})}}\right\}.
$$
Thus, if the length $\ell$ of $P_m$ exceeds one, we have established
\begin{align*}
\sigma_e\left(\sum_{\{j:\zeta_j\in P_m\}} C_{\psi_j}\right)\setminus\{0\} & = \left\{\lambda: \lambda^\ell \in \sigma_e\left(C_{\gamma}\right)\right\}\setminus\{0\}  \quad ((\ref{CPC})\ \text{and the definition of}\ \gamma)\\
& = \left\{\lambda: \lambda^\ell \in  \left\{z: |z| \le  \frac{1}{\sqrt{\gamma'(\zeta_{j_0})}}\right\}\right\}\setminus\{0\} \\
&= \left\{\lambda: |\lambda|  \le  \left(\frac{1}{\gamma'(\zeta_{j_0})}\right)^{\frac{1}{2\ell}}\right\}\setminus\{0\} \\
& = \left\{\lambda: |\lambda|  \le  \left(\frac{1}{\left(\phi^{[\ell]}\right)'(\zeta_{j_0})}\right)^{\frac{1}{2\ell}}\right\}\setminus\{0\} , 
\end{align*}
where we have used (\ref{GTP}) to obtain the final equality. Because the essential spectrum is a closed set, 
$$
\sigma_e\left(\sum_{\{j:\zeta_j\in P_m\}} C_{\psi_j}\right) =  \left\{\lambda: |\lambda|  \le  \left(\frac{1}{\left(\phi^{[\ell]}\right)'(\zeta_{j_0})}\right)^{\frac{1}{2\ell}}\right\}.
$$
Note that by Observation~\ref{CYOB}, the preceding inequality holds with $\zeta_{j_0}$ being replaced by any point of $P_m$.

Finally, suppose that $\ell = 1$; then $P_m$ consists of a single element $\zeta_{j_0}$, which is fixed by $\phi$.  We have
$$
\sigma_e\left(\sum_{\{j:\zeta_j\in P_m\}} C_{\psi_j}\right) = \sigma_e(C_{\psi_{j_0}}),
$$
and the essential spectrum of the composition operator induced by the linear-fractional selfmap $\psi_{j_0}$ (whose second-order boundary data at $\zeta_{j_0}$ agrees with $\phi$'s) can be read off from Theorem~\ref{LFCS}. It will be either the disk
$\{z: |z| \le 1/\sqrt{\phi'(\zeta_{j_0})}\}$, in case $\phi'(\zeta_{j_0}) \ne 1$, or the spiral $\{e^{-at}: t\ge 0\} \cup\{0\}$, where $a = \zeta_{j_0}\phi''(\zeta_{j_0})$, if $\phi'(\zeta_{j_0}) = 1$.

\subsection{ Lead-in points contribute nothing to the essential spectrum.}\label{FSSF}  Suppose that $L(P_m)$ is not empty, containing $s$ elements, $\{\zeta_{L_1}, \ldots, \zeta_{L_s}\}$.  Call an element $\zeta_{L_i}$ of $L(P_m)$ {\it primitive} if there is no $\zeta_{L_q}\in L(P_m)$ such that $\phi(\zeta_{L_q}) = \zeta_{L_i}$.  Because $L(P_m)$ is finite and contains no periodic points for $\phi$, it must contain at least one primitive element. In fact, any element of $L(P_m)$ is either primitive or can be traced back to a primitive element of $L(P_m)$ through selection of inverse images under $\phi$. 

Let $\zeta_{L_i}$ be an arbitrary primitive element of $L(P_m)$.   Let $S = C_{\psi_{L_i}}$ and
$$
T = \sum_{\{j\ne L_i: \zeta_j\in (L(P_m) \cup P_m)\}} C_{\psi_j}.
$$
  Observe that $T$ is $T_m$ (defined by (\ref{TKD}))  with a single summand removed, namely that corresponding to $S$. Now observe that
$$
[S]^2 = 0 \ \text{and}\ [T][S] = 0,
$$
where  $[T][S] = 0$ by the primitivity of $\zeta_{L_i}$.  We apply Lemma~\ref{LIP} to conclude that 
$$
\sigma_e(T_m)\setminus\{0\} = \sigma_e(S + T) \setminus \{0\} =  \sigma_e(T)\setminus\{0\}
$$

Now if $L(P_m) \setminus \{\zeta_{L_i}\}$  not empty, then we repeat the argument of the preceding paragraph with $L(P_m)\setminus \{\zeta_{L_i}\}$ replacing $L(P_m)$ to obtain that $\sigma_e(T_m)\setminus\{0\} = \sigma_e(T)\setminus\{0\}$, where now $T$ is $T_m$ with two summands removed (corresponding to two lead-in points). We may continue this process to conclude that
\begin{equation}\label{OTK3}
\sigma_e(T_m) \setminus \{0\} =   \sigma_e\left(\sum_{\{j: \zeta_j\in P_m\}} C_{\psi_j}\right)\setminus\{0\}.
\end{equation}

\subsection{ Putting all the pieces together.}  

Combining the results from Subsections \ref{FSS1}  through \ref{FSSF}, we have the following:

\begin{theorem}\label{PPT}  Suppose that the analytic selfmap $\phi$ of $\D$ belongs to $\ess(2)$ while $E(\phi)= \{\zeta_1, \ldots, \zeta_n\}$ contains at least one  periodic cycle.  Let $P_1, P_2, \ldots, P_{n_c}$ be the $($disjoint$)$ periodic cycles contained in $E(\phi)$. Then
\begin{equation}\label{FMT}
\sigma_e(C_\phi) = \cup_{k=1}^{n_c} \sigma_e\left(\sum_{\{j: \zeta_j\in P_k\}} C_{\psi_j}\right),
\end{equation}
where for each $j\in \{1, \dots, n\}$,  $\psi_j$ is the linear-fractional selfmap of $\D$ such that $\psi_j$ and $\phi$ share the same second-order data at $\zeta_j$.
\end{theorem}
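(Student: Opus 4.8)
The plan is to assemble the identities established in the four preceding subsections and then dispose of the single point $0$ separately. First I would record that, whether or not the iterate-out set $A$ is empty, equation~(\ref{OTK}) gives
\begin{equation*}
\sigma_e(C_\phi)\setminus\{0\} = \sigma_e\left(\sum_{k=1}^{n_c} T_k\right)\setminus\{0\};
\end{equation*}
indeed, when $A$ is empty this holds as the full equality $\sigma_e(C_\phi) = \sigma_e(\sum_{k=1}^{n_c} T_k)$ noted just after~(\ref{OTK}). Next,~(\ref{OTK2}) splits the right-hand side as $\bigcup_{k=1}^{n_c}\sigma_e(T_k)\setminus\{0\}$, and finally~(\ref{OTK3}) replaces each $\sigma_e(T_k)\setminus\{0\}$ by $\sigma_e\bigl(\sum_{\{j:\zeta_j\in P_k\}} C_{\psi_j}\bigr)\setminus\{0\}$. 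Chaining these three equalities yields
\begin{equation*}
\sigma_e(C_\phi)\setminus\{0\} = \bigcup_{k=1}^{n_c}\sigma_e\left(\sum_{\{j:\zeta_j\in P_k\}} C_{\psi_j}\right)\setminus\{0\},
\end{equation*}
which is~(\ref{FMT}) away from the origin.

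It then remains only to check that $0$ lies in both sides, so that it may be adjoined without disturbing the equality. For the left-hand side, since $\phi\in\ess(2)$ is not an automorphism of $\D$, the point $0$ automatically belongs to $\sigma_e(C_\phi)$ by \cite[Theorem 1]{CTW}. For the right-hand side, the hypothesis that $E(\phi)$ contains at least one cycle forces $n_c\ge 1$, so the union is over a nonempty index set; and the explicit descriptions computed in Subsection~\ref{FSSC} show that each cycle-based essential spectrum $\sigma_e\bigl(\sum_{\{j:\zeta_j\in P_k\}} C_{\psi_j}\bigr)$ is either a closed disk centered at the origin (when the cycle has length greater than $1$, or length $1$ with the angular derivative of $\phi$ at the fixed point different from $1$) or a set of the form $\{e^{-at}:t\ge 0\}\cup\{0\}$ (when the cycle is a single parabolic fixed point). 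In every case the origin is a member, so $0$ belongs to the union.

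Putting these two observations together with the punctured equality above establishes~(\ref{FMT}) exactly. I expect no genuine mathematical obstacle here, since all the substantive work has been carried out in the earlier subsections; the one point demanding care is precisely the treatment of $0$, because each spectral lemma invoked---Proposition~\ref{TA}, Corollary~\ref{CTA}, and Lemma~\ref{LIP}---yields information only about the nonzero part of the spectrum. The non-automorphism criterion of \cite{CTW} and the explicit cycle computations of Subsection~\ref{FSSC} are exactly what is needed to restore the origin on each side.
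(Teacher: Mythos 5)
Your proposal is correct and follows essentially the same route as the paper's own proof: chain the punctured equalities (\ref{OTK}), (\ref{OTK2}), and (\ref{OTK3}), then restore the origin on the left via \cite[Theorem 1]{CTW} and on the right via the explicit disk-or-spiral descriptions of Subsection~\ref{FSSC}. Your added care about the case $A=\emptyset$ and about the lemmas yielding only the nonzero spectrum is consistent with, and slightly more explicit than, the paper's argument.
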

\begin{proof} Combine (\ref{OTK}), (\ref{OTK2}), and (\ref{OTK3}) to obtain the equality (\ref{FMT}) with $0$ excluded from both sides. However, $0$ belongs to the left side because $\phi$ is not an automorphism (\cite[Theorem 1]{CTW}) and $0$ belongs to the right side by the discussion of Subsection~\ref{FSSC} since the essential spectra on the right will either be disks containing $0$   or a  spiral containing $0$.  Thus (\ref{FMT}) holds.
\end{proof}

\begin{theorem}\label{MT}  Suppose that the analytic selfmap $\phi$ of $\D$ belongs to $\ess(2)$ while $E(\phi)= \{\zeta_1, \ldots, \zeta_n\}$ contains at least one  periodic cycle.  Let $P_1, P_2, \ldots, P_{n_c}$ be the $($disjoint$)$ periodic cycles contained in $E(\phi)$. For each $j\in \{1, \ldots, n_c\}$, let $\ell_j$ be the length of $P_j$ and let $\zeta_{s_j}$ denote some element of $P_{j}$. 
Define
\begin{equation}\label{DOP}
\rho =\max\left\{\left(\frac{1}{\left(\phi^{[\ell_j]}\right)'(\zeta_{s_j})}\right)^{\frac{1}{2\ell_j}}: j\in \{1, \ldots, n_c\}\right\},
\end{equation}
and note that the value of $\rho$ is independent of the choice of $\zeta_{s_j}$ in $P_j$  by Observation~\ref{CYOB}.
\begin{itemize}
\item[(a)] If $\phi$ has Denjoy-Wolff point $\omega$ in $\D$, then
$$
 \sigma_e(C_\phi) = \{z: |z| \le \rho\}
 $$
 and 
 $$
 \sigma(C_\phi) =  \{z: |z| \le \rho\}
  \cup \{\phi'(\omega)^m: m= 0, \ldots N-1\},
$$ where $N$ is the least positive integer for which $|\phi'(w)^N| \le \rho$.
\item[(b)] If $\phi$ has Denjoy-Wolff  point $\omega\in \partial \D$ and $\phi'(\omega)  < 1$, then  $\rho = 1/\sqrt{\phi'(\omega)}$ and
$$
\sigma(C_\phi) = \sigma_e(C_\phi) = \{z: |z| \le \rho\}.
$$
\item[(c)]  If $\phi$ has Denjoy-Wolff  point $\omega\in \partial \D$, $\phi'(\omega ) = 1$, and $j_*\in \{1, \ldots, n_c\}$ is such that  $P_{j_*} = \{\omega\}$ $($and thus  $\zeta_{s_{j_*}} =\omega)$,  then for 
$$
\rho_*=  \max\left\{\left(\frac{1}{\left(\phi^{[\ell_j]}\right)'(\zeta_{s_j})}\right)^{\frac{1}{2\ell_j}}: j\in \{1, \ldots, n_c\}\setminus\{j_*\} \right\}
$$
and  $a= \omega\phi''(\omega)$ $($which necessarily has positive real part$)$, we have
$$
\sigma(C_\phi) = \sigma_e(C_\phi) = \{z: |z| \le \rho_*\} \cup \{e^{-at}: t \ge 0\},
$$
where we take $\rho_* =0$ if $ \{1, \ldots, n_c\}\setminus\{j_*\}$ is empty $($equivalently, $n_c=1)$.  
\end{itemize}
\end{theorem}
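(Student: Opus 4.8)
The plan is to feed the explicit description of $\sigma_e(C_\phi)$ coming from Theorem~\ref{PPT} and Subsection~\ref{FSSC} into a short Fredholm-theoretic passage that upgrades the essential spectrum to the full spectrum, case by case. First I would record what the cycles contribute. By Theorem~\ref{PPT}, $\sigma_e(C_\phi) = \bigcup_{k=1}^{n_c} \sigma_e\bigl(\sum_{\{j:\zeta_j\in P_k\}} C_{\psi_j}\bigr)$, and by Subsection~\ref{FSSC} each summand is the closed disk $\{z:|z|\le (1/(\phi^{[\ell_k]})'(\zeta_{s_k}))^{1/(2\ell_k)}\}$, except when $P_k=\{\zeta\}$ is a fixed point with $\phi'(\zeta)=1$, in which case it is the spiral $\{e^{-a_k t}:t\ge 0\}\cup\{0\}$ with $a_k=\zeta\phi''(\zeta)$. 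The key observation is that a boundary fixed point with $\phi'(\zeta)=1$ can only be the Denjoy-Wolff point: any other boundary fixed point has derivative exceeding $1$, and any cycle of length greater than $1$ has $(\phi^{[\ell]})'>1$ by Observation~\ref{CYOB}. Hence at most one spiral appears, and it appears precisely in case (c).

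In case (a), $\omega\in\D$, so no cycle point is the Denjoy-Wolff point and every contributing piece is a disk centered at the origin; their union is the largest one, $\{z:|z|\le\rho\}$, with $\rho<1$. Its complement $\{z:|z|>\rho\}$ is connected, and since $C_\phi-\lambda$ is invertible for large $|\lambda|$ it is Fredholm of index $0$ throughout this component; thus any $\lambda$ of modulus greater than $\rho$ that lies in $\sigma(C_\phi)$ is an eigenvalue. Matching leading Taylor coefficients at $\omega$ in $f\circ\phi=\lambda f$ shows such an eigenvalue must equal $\phi'(\omega)^m$, where $m$ is the order of the zero of $f$ at $\omega$; conversely each power $\phi'(\omega)^m$ of modulus greater than $\rho$ is a genuine eigenvalue (the constant function for $m=0$, and powers of the Koenigs function of $\phi$ at $\omega$ for $m\ge 1$). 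Since $|\phi'(\omega)|<1$, these are exactly the powers with $m=0,\dots,N-1$, which together with $\{z:|z|\le\rho\}=\sigma_e(C_\phi)\subseteq\sigma(C_\phi)$ yields the stated spectrum.

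In cases (b) and (c), $\omega\in\partial\D$, so Cowen's radius formula \cite[Theorem 2.1]{Cow2} together with \cite[Lemma 5.2]{BSM} gives $r(C_\phi)=r_e(C_\phi)=1/\sqrt{\phi'(\omega)}$, whence $\sigma(C_\phi)\subseteq\{z:|z|\le 1/\sqrt{\phi'(\omega)}\}$. In case (b) the cycle $\{\omega\}$ contributes the disk of radius $1/\sqrt{\phi'(\omega)}=\rho$ while all other cycles contribute strictly smaller disks, so $\sigma_e(C_\phi)$ is the \emph{full} disk $\{z:|z|\le\rho\}$; the radius bound then immediately forces $\sigma(C_\phi)=\sigma_e(C_\phi)$. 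In case (c) the fixed point $\omega$ contributes the spiral and the remaining cycles contribute the disk $\{z:|z|\le\rho_*\}$ with $\rho_*<1$; the disk and the spiral share the origin, so $\sigma_e(C_\phi)=\{z:|z|\le\rho_*\}\cup\{e^{-at}:t\ge 0\}$.

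The main obstacle is closing case (c), where $\sigma_e(C_\phi)$ is no longer a full disk and the radius bound alone does not suffice. I would first verify the topology: the spiral $\{e^{-at}:t\ge 0\}$ is a simple arc (its modulus $e^{-t\,\Re a}$ is strictly decreasing) running from $1$ into the origin, so $\sigma_e(C_\phi)$ is a disk with a single simple arc attached at one boundary point; consequently $\C\setminus\sigma_e(C_\phi)$ is connected and unbounded, and $C_\phi-\lambda$ has index $0$ there. It then remains to show $C_\phi$ has no eigenvalue off $\sigma_e(C_\phi)$. Because $\omega\in\partial\D$ there is no interior fixed point to furnish Koenigs-type eigenfunctions, and the expected route is to analyze $f\circ\phi^{[k]}=\lambda^k f$ using that the iterates $\phi^{[k]}$ march to the parabolic Denjoy-Wolff point $\omega$ along the trajectory dictated by the right-halfplane model $\Phi(w)=w+a+\Gamma(w)$ with $\Re a>0$, paralleling Cowen's treatment of the linear-fractional parabolic case recorded in Theorem~\ref{LFCS}(d). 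I expect this eigenvalue-exclusion step, together with the index computation on the connected complement, to be the technical heart; once no such eigenvalues exist, $\sigma(C_\phi)\setminus\sigma_e(C_\phi)=\emptyset$ and $\sigma(C_\phi)=\sigma_e(C_\phi)$, completing the proof.
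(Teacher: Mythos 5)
Your treatment of the essential spectrum in all three cases, and of the full spectrum in cases (a) and (b), matches the paper's proof: feed Theorem~\ref{PPT} and the Subsection~\ref{FSSC} computation into the observation that only the Denjoy-Wolff point can produce a cycle whose disk has radius exceeding $1$ (or a spiral), then close (a) with the Fredholm-index argument plus K\"{o}nigs, and (b) with the spectral-radius bound $r(C_\phi)=1/\sqrt{\phi'(\omega)}$. (One small caveat in (a): your claim that each power $\phi'(\omega)^m$ with $|\phi'(\omega)|^m>\rho$ is a \emph{genuine eigenvalue} via powers of the Koenigs function is not automatic, since $\sigma^m$ need not lie in $H^2(\D)$ a priori; the paper sidesteps this by citing Cowen's result that all powers $\phi'(\omega)^m$ are spectral points, which is all that is needed.)

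The genuine gap is in case (c), and you have flagged it yourself: the eigenvalue-exclusion step is left as an expectation, and the route you propose --- a direct analysis of $f\circ\phi^{[k]}=\lambda^k f$ via the right-halfplane parabolic model, paralleling Cowen's linear-fractional argument --- is not carried out and would be genuinely hard for a general $\ess(2)$ symbol, which need not be univalent, inner, or analytic past finitely many boundary points. (Indeed, the paper's own closing open question, whether $\sigma(C_\phi)=\sigma_e(C_\phi)$ for all parabolic-type $\phi$, signals that such a direct dynamical attack is not available in general.) The paper closes case (c) with a soft argument that requires no analysis of the iteration at all: since the complement of $\sigma_e(C_\phi)$ has a single, unbounded component, standard Fredholm theory gives not merely that spectral points there are eigenvalues, but that they are \emph{isolated} points of the spectrum (discrete eigenvalues of finite multiplicity); and Proposition~2.7 of \cite{BSp} asserts that no eigenvalue of $C_\phi$ other than possibly $1$ can be an isolated spectral point. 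Since $1=e^{-a\cdot 0}$ already lies on the spiral, hence in $\sigma_e(C_\phi)$, there are no spectral points outside the essential spectrum. This upgrade from ``eigenvalue'' to ``isolated eigenvalue,'' combined with the non-isolation result from \cite{BSp}, is the missing idea that makes case (c) a two-line argument rather than the technical heart you anticipated.
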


  Remarks:  The characterization of the spectrum $\sigma$ in part (a),  in case $\phi$ is univalent or analytic on the closed disk, follows from  (\cite[Corollary 19]{CMSP} and \cite[Theorem A, Part 3]{KM}); however, the characterization of the essential spectrum as the full disk $\{z: |z| \le \rho\}$ is new even in these cases.  In part (b), in case $\phi$ is analytic on the closed disk, the characterization of the spectrum follows from \cite[Corollary 4.8]{Cow2} (see also the patch for p.\ 296, line -8 at \verb!http://www.math.iupui.edu/~ccowen/Errata.html!), and it's known \cite[Theorem 4.5]{Cow2} that in general the essential spectrum of $C_\phi$ for $\phi$  of hyperbolic type with Denjoy-Wolff point $\omega$  contains at least the annulus $\{z:  \sqrt{\phi'(\omega)} \le |z| \le 1/\sqrt{\phi'(\omega)}\}$.  Again,  the characterization of the essential spectrum as the full disk $\{z: |z| \le \rho\}$ is new.  Finally, the result of case (c) appears completely new (excluding, the situation where $\phi$ is itself linear fractional or, in certain cases, where it differs from a linear-fractional composition operator by a compact operator \cite{BSp}), and, as discussed in the introduction, settles a conjecture of Cowen's \cite[Conjecture 4]{Cow2}.  
  \begin{proof}[Proof of Theorem \ref{MT}]  We rely on equation (\ref{FMT}):
  $$
\sigma_e(C_\phi) = \cup_{k=1}^{n_c} \sigma_e\left(\sum_{\{j: \zeta_j\in P_k\}} C_{\psi_j}\right),
$$
where for each $j\in \{1, \dots, n\}$,  $\psi_j$ is a linear-fractional selfmap of $\D$ such that $\psi_j$ and $\phi$ share the same second-order data at $\zeta_j$.
  
    Case (a). Suppose that $\phi$ has its Denjoy-Wolff point in $\D$; then  the same is true of $\phi^{[\ell_k]}$ for $k\in \{1, \ldots, n_c\}$.  Thus, for $k\in \{1, \dots, n_c\}$, the derivative of  $\phi^{[\ell_k]}$ at its fixed point $\zeta_{s_k}$,  must exceed $1$, and by the discussion of Subsection~\ref{FSSC}, 
   $$
   \sigma_e\left(\sum_{\{j: \zeta_j\in P_k\}} C_{\psi_j}\right) = \left\{z: |z| \le \left(\frac{1}{\left(\phi^{[\ell_k]}\right)'(\zeta_{s_k})}\right)^{\frac{1}{2\ell_k}}\right\}.
   $$
 It follows that $\rho < 1$ (where $\rho$ is defined by (\ref{DOP})).   From (\ref{FMT}),  it follows that $\sigma_e(C_\phi) = \{z: |z|\le \rho\}$.  Thanks to the continuity of the Fredholm index, any spectral point outside the essential spectrum (which is necessarily in the unbounded component of the complement of the essential spectrum) must be an eigenvalue of $C_\phi$.  Work of K\"{o}nigs \cite{Kg} shows that the nonnegative integral powers of $\phi'(\omega)$ are the only possible eigenvalues of $C_\phi$; moreover, these powers are spectral points (see, e.g. \cite[Theorem 4.1]{Cow2}), which completes the proof of (a).   
    
  Case (b). Suppose that $\phi$ has DW point $\omega\in \partial \D$ and $\phi'(\omega)  < 1$. Since the Denjoy-Wolff point $\omega$ is a periodic point of period $1$, there is some $j_*\in \{1, \ldots, n_c\}$, such that $P_{j_*} = \{\omega\}$, which means $\ell_{j_*} = 1$, $\zeta_{s_{j_*}} = \omega$, and $1/\sqrt{\left(\phi^{[\ell_{j_*}]}\right)'(\zeta_{s_{j_*}})} = 1/\sqrt{\phi'(\omega)} >1$.  For all $j\in \{1, \ldots, n_c\}\setminus\{j_*\}$, we must have $\left(\phi^{[\ell_j]}\right)'(\zeta_{s_j}) > 1$; otherwise, $\phi^{[\ell_j]}$ would have different Denjoy-Wolff points $\zeta_{s_j}$ and $\omega$. Thus $\rho = 1/\sqrt{\phi'(\omega)}$ as claimed.  By (\ref{FMT}), we have  $\sigma_e(C_\phi) = \{z: |z| \le \rho\}$ and since the spectral radius of $C_\phi$ is $1/\sqrt{\phi'(\omega)}$, this completes the proof of part (b).
  
Case (c). Suppose that  $\phi$ has DW point $\omega\in \partial \D$ and  $\phi'(\omega ) = 1$.   Since the Denjoy-Wolff point $\omega$ is a periodic point of period $1$, there is some $j_*\in \{1, \ldots, n_c\}$, such that $P_{j_*} = \{\omega\}$, which means $\zeta_{s_{j_*}} = \omega$.  Note that $a:=\omega\phi''(\omega)$  has positive real part (by Remark (a) following Proposition~\ref{OCT} since $\phi$ has second-order contact at $\omega$). We have
$$
\sigma_e\left(\sum_{\{j: \zeta_j\in P_{j_*}\}} C_{\psi_j}\right)= \sigma_e(C_{\psi_{s_{j_*}}}) = \{0\} \cup \{e^{-at}: t\ge 0\},
$$
where the second equality follows from from part (d) of Theorem~\ref{LFCS} because $D_2(\psi_{s_{j_*}},\zeta_{s_{j_*}}) = D_2(\phi, \zeta_{s_{j_*}})$.  If $n_c =1$, we have verified that the essential spectrum of $C_\phi$ is correctly characterized by part (c).  Suppose that $n_c > 1$. For each
 $k\in \{1,\ldots, n_c\}\setminus\{j_*\}$, we have (from the discussion of Subsection~\ref{FSSC})
$$
\sigma_e\left(\sum_{\{j: \zeta_j\in P_{k}\}} C_{\psi_j}\right) =  \left\{z: |z|  \le  \left(\frac{1}{\left(\phi^{[\ell_k]}\right)'(\zeta_{s_k})}\right)^{\frac{1}{2\ell_k}}\right\}.
$$
Moreover, because, for every $k\in \{1,\ldots, n_c\}\setminus\{j_*\}$, the fixed point $\zeta_{s_k}$ of $\phi^{[\ell_k]}$ is not the Denjoy-Wolff point of $\phi^{[\ell_k]}$, we see $\rho_*=  \max\left\{\left(\frac{1}{\left(\phi^{[\ell_j]}\right)'(\zeta_{s_j})}\right)^{\frac{1}{2\ell_j}}: j\in \{1, \ldots, n_c\}\setminus\{j_*\}\right\}$ is less than $1$ and
(\ref{FMT}) may be applied to obtain the characterization of the essential spectrum described in part (c).  To see that the spectrum equals the essential spectrum, we note that the complement of the essential spectrum has one (unbounded) component.  The only spectral points in the unbounded component of the essential resolvent must be isolated eigenvalues.  However, Proposition~2.7 of \cite{BSp} shows that no eigenvalue of $C_\phi$  (other than possibly $1$, which belongs to the essential spectrum) can be an isolated point of the spectrum of $C_\phi$. Thus $\sigma_e(C_\phi) = \sigma(C_\phi)$ and the proof of part (c) is complete. \end{proof}

\subsection{Applications of Theorem~\ref{MT}}
We have already discussed how the preceding theorem applies to the composition operator $C_{\phi_{lp}}$ where
$\phi_{lp}(z) =  (2z^2 - z -2)/(2z^2-3)$.   By the discussion of Example~\ref{MLPS2}, $\phi_{lp}$ belongs to $\ess(2)$.   The cycles of $\phi_{lp}$ lying in $E(\phi)$ are $P_1 = \{1\}$ and $P_2 = \{-1\}$.  Since $\phi'(1) = 1$, $\phi''(1) = 8$ and $\phi'(-1) = 9$, by part (c) of Theorem~\ref{MT},
$$
 \sigma(C_{\phi_{lp}}) = \sigma_e(C_{\phi_{lp}}) = \left\{z: |z| \le \frac{1}{\sqrt{\phi'(-1)}}\right\} \cup \{e^{-8t}: t \ge 0\} =  \left\{z: |z| \le \frac{1}{3}\right\} \cup [0, 1].
 $$
Let's consider some additional applications.

Let $\phi(z) = \frac{-z}{3-2z^2}$.   Since $\{z: |\phi(z)| = 1\} = \{-1, 1\}$  and  $\phi$ has finite angular derivative  at $1$ and $-1$, $E(\phi) = \{1, -1\}$.  Since $\phi$ is analytic on the closed disk, we obviously have $\phi\in C^2(1) \cap C^2(-1)$.  Finally, it's easy to use Proposition~\ref{OCT} to see that $\phi$ has second order of contact at both $-1$ and $1$.  Thus $\phi\in \ess(2)$.  Here, $E(\phi) = \{-1,1\}$ is a single periodic cycle and $0$ is the Denjoy-Wolff point of $\phi$.  Thus by part (a) of Theorem~\ref{MT},  $\sigma_e(C_\phi) = \{z: |z| < 1/\sqrt[4]{(\phi^{[2]})'(1)}\} = \{z: |z| < 1/\sqrt{5}\}$ and since the Denjoy-Wolff derivative $\phi'(0) = -1/3$, we have $\sigma(C_\phi) = \{z: |z| < 1/\sqrt{5}\} \cup \{1\}$.

Our next example features an analytic selfmap of $\D$ that does not extend to be analytic on a neighborhood of the closed disk $\D^-$.  Let $\sqrt{\cdot}$ denote the principal branch of the square-root function.  Consider the selfmap $\phi$ of $\D$ whose right-halfplane incarnation $\Phi$ is given by
$$
\Phi(w) = 2w + 1 - \frac{1}{\sqrt{w+1}}.
$$
It's easy to check that the unit-disk incarnation of $\Phi$ is
$$
\phi(z) = \frac{2\sqrt{2}(1+z)- (1-z)\sqrt{1-z}}{4\sqrt{2} - (1-z)\sqrt{1-z}}.
$$
 Here $E(\phi) = \{-1, 1\}$ and $\phi\in C^2(-1)$ (in fact $\phi$ is analytic in a neighborhood of $-1$, and it's easy to check directly that $\phi\in C^2(1)$).  Using the second-order boundary data at $-1$ ($\phi(-1) = -1$, $\phi'(-1) = 5/2$, $\phi''(-1) = -33/8$) and at $1$ ($\phi(1) = 1$, $\phi'(1) = 1/2$ and $\phi''(1) = 0$), one can use Proposition~\ref{OCT} to see that $\phi$ has second-order contact at $-1$ and $1$.  Thus $\phi\in \ess(2)$.  The set $E(\phi)$ consists of two fixed points with $1$ being the Denjoy-Wolff point of $\phi$.  Applying Theorem~\ref{MT}(b), we have $\sigma_e(C_\phi) = \sigma(C_\phi) = \{z: |z| \le \sqrt{2}\}$.

Our final application of Theorem~\ref{MT} is to the selfmap $\phi_1$ of Example~\ref{PPEX}.  As we discussed earlier,  $E(\phi_1)=  \{\zeta\in \partial\D: |\phi_1(\zeta)| = 1\} =\{\zeta_j: j = 1, \ldots, 8\}$ where $\zeta_j = e^{(j-1)\pi i/4}$ for $j = 1, 2, \ldots, 8$.  Proposition~\ref{OCT} may be used to confirm that $\phi$ has second-order contact at each point of $E(\phi)$.   Because $\phi$ has Denjoy-Wolff point $0\in \D$, part (a) of Theorem~\ref{MT} holds.  The function $\phi_1$ has 3 periodic cycles $P_1= \{1, -1\},  P_2= \{e^{3\pi i/4}\},  P_3=\{e^{7\pi i/4}\}$.  We compute,  $(\phi^{[2]})'(-1) = (\phi^{[2]})'(1) = 144 $, $\phi'(e^{3\pi i/4}) = 15 $ and $\phi'(e^{7\pi i/4}) = 15$. Thus $\rho =1/\sqrt{12} $ and we conclude that $\sigma_e(C_{\phi_1}) = \{z: |z| \le 1/\sqrt{12}\}$.  Since the Denjoy-Wolff derivative of $\phi$ is $\phi'(0) = 0$, we have $\sigma(C_{\phi_1}) = \{z: |z| \le 1/\sqrt{12}\} \cup \{1\}$.

\subsection{Two Open Questions}Observe that the results of Theorem~\ref{MT} are consistent with ``yes'' answers to the following open questions concerning  spectra of composition operators on $H^2(\D)$. 
  \begin{itemize}
   \item  For $\phi$ of hyperbolic type or  of parabolic type, do the spectrum and essential spectrum of $C_\phi$ always coincide?
 \item   Let $\phi$ be an non-automorphic analytic selfmap of $\D$ having its Denjoy-Wolff point $\omega$ in $\D$.   Does the essential spectrum consist of a disk (possibly degenerate) of radius less than $1$? 
 \end{itemize}


\end{document}